\newtheorem{lemma}{Lemma}[section]
\newtheorem{lemm}[lemma]{Lemma}
\newtheorem{coro}[lemma]{Corollary}
\newtheorem{theo}[lemma]{Theorem}
\newtheorem{defi}[lemma]{Definition}
\newtheorem{exam}[lemma]{Example}
\newcommand{\Z}{\mathbb Z}
\newcommand{\R}{\mathbb R}
\newcommand{\Q}{\mathbb Q}
\begin{document}
\title{Groups with twisted ${\bf p}$-periodic cohomology}
\author{Guido Mislin}
\author{Olympia Talelli}
\address{Dept. of Mathematics ETH Z\"urich, Switzerland}    
\curraddr{Dept. of Mathematics, Ohio State University}
\email{mislin@math.ethz.ch}
\address{Dept. of Mathematics, University of Athens, Greece}
\email{otalelli@cc.uoa.gr}
\date{May 17, 2014}
\begin{abstract}
We give a characterization of groups with twisted $p$-periodic cohomology
in terms of group actions on mod $p$ homology spheres. An
equivalent algebraic characterization of such groups is also presented.
\end{abstract}
\maketitle
\section{Introduction}
We will be considering groups with twisted $p$-periodic cohomology ($p$ a prime) in the following sense.
Write $\hat{\Z}_{p}(\omega)$ for the group of $p$-adic integers, equipped with a $G$-action via a homomorphism $\omega: G\to \hat{\Z}_p^\times$. For $M$ a $\Z G$-module, we write $M_\omega$ for the
$\Z G$-module $M\otimes\hat{\Z}_{p}(\omega)$ with diagonal $G$ action.
\begin{defi}{\label{twisted}}
A group $G$ is said to have twisted $p$-periodic cohomology, if there is a $k>0$, a homomorphism
$\omega:G\to \hat{\Z}_p^\times$ and a cohomology class 
$e_\omega\in H^n(G,\hat{\Z}_{p}(\omega))$ for some $n>0$, such that 
$$e_\omega\cup - : H^i(G,M)\to H^{i+n}(G,M_\omega)$$
is an isomorphism for all $i\ge k$ and all $p$-torsion $\Z G$-modules $M$
of finite exponent.
In case the twisting $\omega$ can be chosen to be trivial, we say that
$G$ has $p$-periodic cohomology.
\end{defi}

By replacing $e_\omega$ with $e_\omega^2$ we see that for  
$G$ with twisted $p$-periodic cohomology one can assume, if one wishes to,
that the degree $n$ of the periodicity
generator is even. In case of a finite group $G$ we infer, by replacing $e_\omega$ by a suitable cup power, that if $G$ has twisted $p$-periodic cohomology, it also has $p$-periodic cohomology.
A classical theorem states that a finite group has $p$-periodic cohomology
if and only if all
its abelian $p$-subgroups are cyclic. Moreover, the finite groups
with $p$-periodic cohomology have the following characterization in terms of actions on $\Z/p\Z$-homology
spheres. 

\begin{theo}[Swan \cite{Swan}]\label{Swan} A finite group $G$ has $p$-periodic cohomology if and only if
there exists a finite, simply connected free $G$-$CW$-complex, which has the same $\Z/p\Z$-homology as
some sphere.
\end{theo}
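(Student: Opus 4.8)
The plan is to prove the two implications separately, the implication that $p$-periodicity produces such an action (which is Swan's construction) being the substantial one.

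For the easy direction, suppose $X$ is a finite, simply connected, free $G$-$CW$-complex with $H_*(X;\F_p)\cong H_*(S^n;\F_p)$. Since $X$ is simply connected and the action is free, $X$ is the universal cover of the finite complex $X/G$, so there is a fibration $X\to X/G\to BG$ with $\pi_1(X/G)=G$, and I would run its Serre spectral sequence with $\F_p$-coefficients. As $\widetilde H_*(X;\F_p)$ is concentrated in degree $n$ --- with $G$ acting on $H^n(X;\F_p)\cong\F_p$ through a character $\chi:G\to\F_p^{\times}$ --- the $E_2$-page has only the rows $t=0$ and $t=n$, and the unique possibly nonzero differential is the transgression $H^s(G;\F_p(\chi))\to H^{s+n+1}(G;\F_p)$. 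Since $X/G$ is finite dimensional, its $\F_p$-cohomology vanishes above some degree, which forces this transgression to be an isomorphism for all large $s$; in particular $\dim_{\F_p}H^s(G;\F_p)$ is bounded, and the same holds for every subgroup of $G$ (restrict the action to it). Since $\dim_{\F_p}H^s((\Z/p\Z)^2;\F_p)$ is unbounded --- equivalently, by Smith theory $(\Z/p\Z)^2$ admits no free action on a finite dimensional mod $p$ homology sphere --- $G$ contains no copy of $(\Z/p\Z)^2$; thus every abelian $p$-subgroup of $G$ is cyclic, and by the classical theorem recalled above $G$ has $p$-periodic cohomology.

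For the converse I would follow Swan. Having $p$-periodic cohomology is equivalent to $\Omega^N(\Z_{(p)})$ being stably isomorphic to $\Z_{(p)}$ over $\Z_{(p)}G$, with $N$ any sufficiently large multiple of the period, in particular even; equivalently, to the existence of a finite exact sequence $0\to\Z_{(p)}\to P_{N-1}\to\cdots\to P_0\to\Z_{(p)}\to 0$ of finitely generated projective $\Z_{(p)}G$-modules. The goal is to promote this to the cellular chain complex of a finite, simply connected, free $G$-$CW$-complex, in three moves. First, make the modules \emph{free}: stabilizing consecutive terms by finitely generated free modules concentrates the non-freeness in a single class in $\tilde K_0(\Z_{(p)}G)$, which is a finite group, and splicing enough copies of the sequence (the obstructions add, $N$ being even) annihilates that class. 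Second, descend to $\Z G$: clearing prime-to-$p$ denominators in the differentials yields a bounded complex $C_*$ of finitely generated free $\Z G$-modules with the same $p$-localization, at the cost of some prime-to-$p$ torsion in $H_*(C_*)$ which is invisible modulo $p$; grafting on at the bottom the equivariant cellular chains of a finite presentation $2$-complex of $G$ arranges $H_0(C_*)=\Z$ and $H_1(C_*)=0$ exactly. Third, realize $C_*$: starting from the universal cover of that $2$-complex and attaching free $G$-cells in dimensions $\ge 3$, one obtains a finite, simply connected, free $G$-$CW$-complex $X$ with cellular chain complex $C_*$; then $X/G$ has fundamental group $G$, and $\widetilde H_*(X;\F_p)$ is $\F_p$ concentrated in a single degree, so $X$ is a mod $p$ homology sphere.

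The genuine obstacle is the converse, and within it the passage from the algebraic periodicity to a \emph{free}, geometrically realizable complex over $\Z G$: this is what forces one to enlarge the period, to neutralize the finiteness obstruction in $\tilde K_0$, and to keep track of integral homology while descending from $\Z_{(p)}G$ --- and it is followed by the standard but delicate realization of an algebraic chain complex as a simply connected finite free $G$-$CW$-complex with prescribed fundamental group of the quotient (delicate precisely because the low-dimensional torsion keeps $X$ from being highly connected). The first implication, by contrast, is soft: it rests only on the two-row spectral sequence together with the impossibility of free $(\Z/p\Z)^2$-actions on mod $p$ homology spheres.
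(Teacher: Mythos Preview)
The paper does not prove this statement: it is quoted as a classical theorem of Swan, with citation, and serves purely as background motivating the paper's own Theorems~\ref{main} and~\ref{main2}. There is therefore no proof in the paper to compare your proposal against.

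As an outline of Swan's argument your proposal is sound. The easy direction is correct; a small simplification is that on any $p$-subgroup $P$ the character $\chi|_P:P\to\F_p^\times$ is automatically trivial (its target has order prime to $p$), so boundedness of $\dim_{\F_p}H^*(P;\F_p)$ follows directly without unwinding the twist. For the converse your three moves are the right architecture, but the last two are doing more work than your sentences acknowledge. The descent from $\Z_{(p)}G$ to $\Z G$ by ``clearing denominators'' is not entirely innocent: one must check that the resulting complex still has the claimed $\F_p$-homology and that the low-dimensional grafting can be performed compatibly with freeness and finite generation. More seriously, the geometric realization step is where the real content lies. Once $C_*$ carries nontrivial intermediate (prime-to-$p$) homology, the Hurewicz map $\pi_k(X^{(k)})\to H_k(X^{(k)})$ need not be surjective on the nose, so one cannot simply ``attach free $G$-cells to match $C_*$'' dimension by dimension; one must either allow stabilization of $C_*$ by elementary acyclic free summands or invoke a realization theorem of Wall type. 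None of this invalidates the sketch, but it is exactly where the labor in Swan's paper is concentrated, and a reader would not be able to reconstruct it from what you have written.
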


Our goal is to find a similar characterization for arbitrary groups with (twisted) $p$-periodic 
cohomology.

\begin{defi} A $CW$-complex $X$ is called a $\Z/p\Z$-homology $n$-sphere, if $H_*(X,\Z/p\Z)\cong H_*(S^n,\Z/p\Z)$.
\end{defi}

In Section \ref{mainsection} we will prove the following generalization of Theorem \ref{Swan}.

\begin{theo}\label{main} A group $G$ has twisted $p$-periodic cohomology if and only
if there exist a simply connected $\Z/p\Z$-homology sphere $X$, which is a free
$G$-$CW$-complex satisfying $cd_{\Z/p\Z}(X/G)<\infty$.
\end{theo}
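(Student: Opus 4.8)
I would route the equivalence through an algebraic reformulation. Say that \emph{$G$ has a $p$-adic periodic resolution after $k$ steps} if there are a character $\omega\colon G\to\hat\Z_p^\times$, an integer $q>0$, and an exact sequence of $\hat\Z_p G$-modules
$$0\to L_\omega\to F_{q-1}\to\cdots\to F_0\to L\to 0$$
with each $F_i$ free and $L$ a $k$-th syzygy of the trivial module $\hat\Z_p$ (i.e.\ fitting in an exact $0\to L\to P_{k-1}\to\cdots\to P_0\to\hat\Z_p\to0$ with the $P_i$ free); here $L_\omega=L\otimes\hat\Z_p(\omega)$. A standard dimension-shifting argument -- combined with a d\'evissage from $\F_p$-coefficients to arbitrary bounded $p$-torsion coefficients via long exact sequences and the five lemma, and with the Teichm\"uller section of $\hat\Z_p^\times\twoheadrightarrow\F_p^\times$ to produce the twist -- shows that $G$ has twisted $p$-periodic cohomology if and only if it has a $p$-adic periodic resolution after some number of steps, of period $q$. (This is the ``equivalent algebraic characterization'' promised in the abstract.) It then remains to match this with the existence of $X$.

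$(\Leftarrow)$ Given $X$, put $C_\ast=C_\ast(X;\hat\Z_p)=C_\ast(X;\Z)\otimes\hat\Z_p$, a complex of \emph{free} $\hat\Z_p G$-modules since $G$ acts freely, with $H_0(C_\ast)=\hat\Z_p$, $H_n(C_\ast)=:A$, and every other homology group a $p$-divisible, $p$-torsion-free $\hat\Z_p$-module, hence a $\Q_p$-vector space. Two remarks carry the argument. First, for a $\Q_p$-vector space $G$-module $V$ and a bounded $p$-torsion module $M$ one has $\mathrm{Ext}^\ast_{\hat\Z_p G}(V,M)=0$, being at once uniquely $p$-divisible and of bounded $p$-exponent; so the intermediate homology of $C_\ast$ is invisible to $\mathrm{Hom}_{\hat\Z_p G}(-,M)$. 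Second, $H_{n+1}(X;\F_p)=0$ forces $H_n(X;\Z)$ to be $p$-torsion-free, so $A$ modulo its maximal divisible submodule is $\cong\hat\Z_p(\omega)$, with $\omega$ reducing mod $p$ to the $G$-action on $H_n(X;\F_p)$ (if that quotient is itself a $\Q_p$-vector space, one instead gets $cd_{\Z/p\Z}(G)<\infty$ directly, the trivial case of the Definition). Since $X\to X/G$ is the universal cover, $\mathrm{Hom}_{\hat\Z_p G}(C_\ast,M)$ computes $H^\ast(X/G;\mathcal M)$, which vanishes above $d:=cd_{\Z/p\Z}(X/G)$ for every bounded $p$-torsion coefficient system. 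After applying $\mathrm{RHom}_{\hat\Z_p G}(-,M)$ the $\Q_p$-vector space homology contributes nothing, so $C_\ast$ behaves like a two-stage extension of $\hat\Z_p$ by $\hat\Z_p(\omega)[n]$, producing a class $e_\omega\in H^{n+1}(G;\hat\Z_p(\omega))$ (the Euler class of the action) and a Gysin exact sequence
$$\cdots\to H^{i-n-1}(G;M)\xrightarrow{\ \cup\,e_\omega\ }H^{i}(G;M_\omega)\to H^{i}(X/G;\mathcal M_\omega)\to H^{i-n}(G;M)\xrightarrow{\ \cup\,e_\omega\ }H^{i+1}(G;M_\omega)\to\cdots$$
Vanishing of the middle terms in high degrees makes $\cup e_\omega\colon H^i(G;M)\to H^{i+n+1}(G;M_\omega)$ an isomorphism for all large $i$ and every bounded $p$-torsion $M$, i.e.\ $G$ has twisted $p$-periodic cohomology of period $n+1$.

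$(\Rightarrow)$ Conversely, assume twisted $p$-periodic cohomology and fix (by the algebraic reformulation) a $p$-adic periodic resolution after $k$ steps, with twist $\omega$ and period $q$; replacing the periodicity class by a cup power I may take $q$ as large as I wish. The plan is to realize this geometrically: build a free $G$-$CW$-complex $X$ whose cellular chain complex consists, $p$-locally, of the lowest stages of a free resolution of the trivial module -- with the $1$- and $2$-skeleta chosen so that $X$ becomes simply connected, harmless once $q-1\ge 3$ -- continued by the periodic block and then by a free resolution of the kernel of a surjection $\Omega^{q+k}\hat\Z_p\twoheadrightarrow\hat\Z_p(\omega)$ representing the periodicity class. (That the class, or a cup power of it, is represented by an honest surjection is automatic when $G$ has $p$-torsion, for then the class is $p$-indivisible; when $G$ has finite $\F_p$-cohomological dimension one may instead take $X$ with vanishing Euler class and $q$ above that dimension.) The resulting $X$ is a simply connected $\Z/p\Z$-homology $(q-1)$-sphere, and the Serre spectral sequence of $X\to X/G\simeq EG\times_G X\to BG$ -- which has only the rows $H^\ast(G;\F_p)$ and $H^\ast(G;\F_p(\bar\omega))$, its single nonzero differential being (up to the twist) cup product with the mod-$p$ reduction of the periodicity class -- collapses in high degrees precisely because of periodicity, whence $cd_{\Z/p\Z}(X/G)<\infty$.

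The ``$\Leftarrow$'' direction is the soft one. The crux of the proof is the construction in ``$\Rightarrow$'': realizing the algebraic periodic resolution as the cellular chain complex of a genuine, simply connected, free $G$-$CW$-complex -- in particular descending from $\hat\Z_p$- (or $\Z_{(p)}$-) coefficients back to integral chains, which cannot be done naively since a $\Z/p\Z$-coefficient chain complex need not be the mod-$p$ reduction of a $\Z G$-CW-complex, and installing simple connectivity without altering the mod-$p$ homology. This technical heart generalizes Swan's geometric realization of periodic resolutions for finite groups; once it is in place, the matching of the \emph{finite length} of the periodic block with the condition $cd_{\Z/p\Z}(X/G)<\infty$ is exactly the Gysin/Serre spectral sequence bookkeeping above.
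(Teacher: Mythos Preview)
Your $(\Leftarrow)$ direction is in the right spirit: like the paper, you extract a twisted Euler class and a Gysin sequence and use $cd_{\Z/p\Z}(X/G)<\infty$ to force cup product with that class to be an isomorphism in high degrees. The paper runs this through the Serre spectral sequence of $X\to X/G\to K(G,1)$ with $\Z/p^k\Z$-coefficients, which sidesteps the module-theoretic claim you make about $H_n(X;\hat\Z_p)$. In particular, your assertion that $A$ modulo its maximal divisible submodule is $\hat\Z_p(\omega)$ \emph{as a $G$-module} is not justified: even if one has $0\to\hat\Z_p\to A\to V\to 0$ with $V$ a $\Q_p$-vector space over $\hat\Z_p$, nothing forces the copy of $\hat\Z_p$ to be $G$-stable with the action $\omega$. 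This is repairable (filter once more, or simply run the paper's argument), so the direction is essentially fine.

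The genuine gap is in $(\Rightarrow)$. You correctly flag the geometric realization as the crux and then do not carry it out --- but this is not a routine step. Swan's realization is over $\Z G$ for \emph{finite} $G$ with finitely generated modules; here $G$ is arbitrary and your periodic complex lives over $\hat\Z_p G$ (or $\Z_{(p)}G$), and, as you yourself concede, there is no mechanism to descend it to a $\Z G$-complex that could then be built cell by cell \`a la Wall. The paper does something entirely different, following Adem--Smith: it never tries to realize a chain complex. Instead it constructs a fibration $P_{N+k}(S^{N}_{\Z/p\Z})\to E(k)\to K(G,1)$ inductively up the Postnikov tower of the fibre $S^N_{\Z/p\Z}$. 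The obstruction to passing from stage $k-1$ to stage $k$ is a class $[\theta]\in H^{N+k+1}(P(k-1),\pi)$ with $\pi$ a finite $p$-group, and one must show it restricts from $E(k-1)$, i.e.\ is a permanent cycle in the Serre spectral sequence of $P(k-1)\to E(k-1)\to K(G,1)$. The only two possibly nonzero differentials on $[\theta]$ land in the kernel, respectively the cokernel, of cup product with the periodicity class $\epsilon_\omega^m$; the twisted $p$-periodicity hypothesis is invoked \emph{exactly here} to make both vanish once $k$ exceeds the periodicity threshold $\ell_0$. The homotopy limit over $k$ then yields $S^{N}_{\Z/p\Z}\to E\to K(G,1)$, and the same Gysin bookkeeping gives $cd_{\Z/p\Z}E<\infty$. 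This obstruction-theoretic construction is the content of the theorem; your plan substitutes for it an appeal to a generalization of Swan that is not available in the form you need.
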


\smallskip
For the definition of the cohomological dimension $cd_{\Z/p\Z}$ of a space see Section \ref{dim}.

\smallskip
As we will see (cf. Section \ref{last}), there are groups which have twisted $p$-periodic cohomology
but which do not have $p$-periodic cohomology. For
groups with $p$-periodic cohomology we prove the following characterization.

\begin{theo}{\label{main2}} A group $G$ has $p$-periodic cohomology
if and only if
there exist a free $G$-$CW$-complex $X$ with homotopically trivial $G$-action such that
$X$ is a $\Z/p\Z$-homology sphere satisfying $cd_{\Z/p\Z}(X/G)<\infty$.
\end{theo}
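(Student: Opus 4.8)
The plan is to prove the two implications separately: the ``if'' direction by a Leray--Serre spectral sequence argument, and the ``only if'' direction by combining Theorem~\ref{main} with a fibrewise $\Z/p\Z$-completion.

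For the ``if'' direction, let $X$ be as in the statement, say a $\Z/p\Z$-homology $n$-sphere, write $d=cd_{\Z/p\Z}(X/G)<\infty$, and view $X/G$ up to homotopy as the total space of the fibration $X\to X/G\to BG$ classified by $\pi_1(X/G)\twoheadrightarrow G$. Since the $G$-action on $X$ is homotopically trivial, the monodromy on $X$ is trivial, so for any $\Z G$-module $M$ the local system $H^t(X;M)$ on $BG$ is $H^t(X;\Z/p\Z)\otimes M$ with $G$ acting only on the second factor; this is $M$ for $t\in\{0,n\}$ and $0$ otherwise. For a $p$-torsion module $M$ of finite exponent the Leray--Serre spectral sequence
$$E_2^{s,t}=H^s\!\big(G;H^t(X;M)\big)\ \Longrightarrow\ H^{s+t}(X/G;\underline M)$$
thus has $E_2^{s,t}=H^s(G;M)$ for $t\in\{0,n\}$, zero otherwise, and abutment vanishing in total degrees $>d$. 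Hence the only nonzero differential is $d_{n+1}$, which is a derivation over the $M=\Z/p\Z$ spectral sequence and is therefore cup product with the transgression $\zeta:=d_{n+1}(\iota)\in H^{n+1}(G;\Z/p\Z)$ of the fundamental class $\iota\in E_2^{0,n}$; comparing with the abutment forces $\zeta\cup-\colon H^s(G;M)\to H^{s+n+1}(G;M)$ to be an isomorphism for all $s>d-n$. As the coefficients were untwisted, a suitable cup power of $\zeta$, lifted to an $\hat\Z_p$-coefficient class, exhibits $G$ as having $p$-periodic cohomology (with trivial twisting) in the sense of Definition~\ref{twisted}. If $n=0$ the spectral sequence degenerates, forcing $cd_{\Z/p\Z}(G)<\infty$, and $G$ again has $p$-periodic cohomology.

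For the ``only if'' direction, suppose $G$ has $p$-periodic cohomology; then it has twisted $p$-periodic cohomology with trivial twisting, so Theorem~\ref{main} gives a simply connected $\Z/p\Z$-homology $m$-sphere $Y$ that is a free $G$-$CW$-complex with $cd_{\Z/p\Z}(Y/G)<\infty$, and one arranges (feeding the periodicity data with trivial twisting into the realization of Theorem~\ref{main}) that $G$ acts trivially on $H_m(Y;\hat\Z_p)\cong\hat\Z_p$. Fibrewise $\Z/p\Z$-completion of $Y\times_G EG\to BG$ has fibre the Bousfield--Kan completion $Y^{\wedge}_p$, which, $Y$ being a simply connected $\Z/p\Z$-homology $m$-sphere, satisfies $Y^{\wedge}_p\simeq(S^m)^{\wedge}_p$; passing to a free $G$-$CW$ model $X$ of the corresponding free $G$-space we obtain: $X$ is a $\Z/p\Z$-homology $m$-sphere; $X/G$ is $\Z/p\Z$-homology equivalent with $\Z/p\Z$-local coefficients to $Y/G$, so $cd_{\Z/p\Z}(X/G)<\infty$; and $G$ acts on $X\simeq(S^m)^{\wedge}_p$ through $\mathrm{hAut}\big((S^m)^{\wedge}_p\big)\cong\hat\Z_p^{\times}$, namely by the homomorphism recording its action on $\pi_m\big((S^m)^{\wedge}_p\big)\cong\hat\Z_p$, which is the trivial monodromy of $Y$. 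Thus each $g\in G$ acts on $X$ by a degree-one self-equivalence, i.e.\ homotopically trivially.

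The crux is the parenthetical claim in the last paragraph: that the realization in Theorem~\ref{main} can be taken with trivial monodromy once the periodicity twisting is trivial. This needs care because the twisting attached to a periodicity of a fixed group is not unique --- for instance $S_3$ has $3$-periodic cohomology both untwistedly (period $4$) and with the sign twisting (period $2$) --- so one must realize the untwisted one; what makes the completion harmless is precisely that it kills all prime-to-$p$ homology of $Y$ (on which $G$ may still act non-trivially) and retains only the degree in $\hat\Z_p$. The remaining points are routine: carrying out the fibrewise completion inside the category of free $G$-$CW$-complexes (via the Borel construction and a free $G$-$CW$ approximation), and, in the ``if'' direction, the standard passage from a $\Z/p\Z$-periodicity generator to an $\hat\Z_p$-generator as in Definition~\ref{twisted} (cf.\ the equivalent algebraic characterization referred to in the introduction).
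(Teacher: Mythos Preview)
Your ``if'' direction is essentially the paper's own argument: the Gysin sequence derived from the Serre spectral sequence of $X\to X/G\to K(G,1)$, together with the observation that homotopical triviality forces the orientation homomorphism $\omega:G\to\hat{\Z}_p^\times$ to be trivial, so that the transgression lifts to an untwisted $\hat{\Z}_p$-Euler class. (No cup power is needed for the lift: the compatible system of $\Z/p^k\Z$-Euler classes already assembles to a class in $H^{n+1}(G,\hat{\Z}_p)$, as in the discussion preceding Corollary~\ref{fibration to twisted}.)

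Your ``only if'' direction, however, is circuitous in a way that hides rather than supplies the main content. You invoke Theorem~\ref{main} as a black box, then say that ``one arranges'' the monodromy on $H_m(Y,\hat{\Z}_p)$ to be trivial by ``feeding the periodicity data with trivial twisting into the realization of Theorem~\ref{main}''. You then correctly flag this as the crux---but you never actually carry it out. The statement of Theorem~\ref{main} gives no control on the orientation character of the resulting action, so the entire burden lies in re-examining its \emph{proof}. That re-examination is exactly what the paper does: starting from a $p$-integral (rather than merely $\omega$-$p$-integral) periodicity class, Lemma~\ref{beta} produces \emph{orientable} $k$-partial $\Z/p\Z$-Euler classes, and orientability is preserved through the inductive Postnikov-tower construction; by Lemma~\ref{P10} the limiting fibration $S^{2nm-1}_{\Z/p\Z}\to E\to K(G,1)$ is then orientable, i.e.\ the $G$-action on the fibre is homotopically trivial. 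Once you have done this, your subsequent fibrewise $\Z/p\Z$-completion is redundant: the fibre coming out of the construction is already $S^{2nm-1}_{\Z/p\Z}$ with trivial homotopy action, so there is nothing left to complete. In short, the paper's route is strictly shorter---it is the orientable thread of the Theorem~\ref{main} argument, run once---whereas your route amounts to running that same argument (implicitly, inside the parenthetical) and then appending an unnecessary completion step.
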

\noindent

We will also be considering groups with $\Z/p\Z$-periodic cohomology in the following sense.

\begin{defi}
A group $G$ is said to have $\Z/p\Z$-periodic cohomology, if there is a cohomology class
$e\in H^n(G,\Z/p\Z)$ for some $n>0$ and an integer $k>0$, such that for every $\Z/p\Z[G]$-module $M$ the map
$$e\cup - : H^i(G,M)\to H^{i+n}(G,M)$$
is an isomorphism for all $i\ge k$. 
\end{defi}

The following is a simple observation.

\begin{lemm}\label{twistedmodp}
Suppose that $G$ has twisted $p$-periodic cohomology. Then $G$ has $\Z/p\Z$-periodic cohomology.
\end{lemm}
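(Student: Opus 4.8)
The plan is to start from the twisted $p$-periodicity data and show it forces $\Z/p\Z$-periodicity. So suppose $G$ has twisted $p$-periodic cohomology: there are $k>0$, a homomorphism $\omega:G\to\hat\Z_p^\times$, and a class $e_\omega\in H^n(G,\hat\Z_p(\omega))$ with $e_\omega\cup-:H^i(G,M)\to H^{i+n}(G,M_\omega)$ an isomorphism for all $i\ge k$ and all $p$-torsion $\Z G$-modules $M$ of finite exponent. The key point is that $\Z/p\Z$ itself (with trivial action) is such a module, and moreover the twisting essentially disappears after a single additional cup product. Concretely, $\hat\Z_p(\omega)/p\cong \Z/p\Z$ as $\Z G$-modules, because the reduction mod $p$ of $\omega:G\to\hat\Z_p^\times$ lands in $1+p\hat\Z_p$, hence acts trivially on $\hat\Z_p/p$. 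Therefore the class $e_\omega$ reduces to a class $\bar e\in H^n(G,\Z/p\Z)$, and for any $\Z/p\Z[G]$-module $M$ we have $M_\omega/p\cong M$ (since $\hat\Z_p(\omega)$ already becomes trivial mod $p$ and $M$ is already annihilated by $p$). Thus the reduction mod $p$ of the twisted cup-product map $e_\omega\cup-$ is precisely $\bar e\cup-:H^i(G,M)\to H^{i+n}(G,M)$.

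Next I would make this comparison of maps precise via a change-of-rings / coefficient-sequence argument. For $M$ a $\Z/p\Z[G]$-module, one can regard $M$ as a $\Z G$-module of exponent $p$, apply the hypothesis to get that $e_\omega\cup-:H^i(G,M)\to H^{i+n}(G,M_\omega)$ is an isomorphism for $i\ge k$, and then compose with the natural map induced by $M_\omega\to M_\omega/p\cong M$. The composite is $\bar e\cup-$. To conclude that $\bar e\cup-$ is itself an isomorphism in high degrees, I would examine the short exact sequence $0\to M_\omega \xrightarrow{p} M_\omega \to M_\omega/p\to 0$; here the first map is multiplication by $p$, whose effect on cohomology is governed by the fact that $M_\omega$ has elements of all $p$-power orders but the hypothesis only applies to finite-exponent modules. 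A cleaner route: apply the hypothesis directly to the coefficient module $M$ (exponent $p$) and also to $M$ shifted, and compare the long exact sequences. Alternatively — and this is probably the slickest — note $M_\omega = M\otimes\hat\Z_p(\omega)$ and since $pM=0$ we have $M\otimes\hat\Z_p(\omega) = M\otimes(\hat\Z_p(\omega)/p) = M\otimes\Z/p\Z = M$ already as $\Z G$-modules, so $M_\omega\cong M$ canonically and the twisted cup product $e_\omega\cup-$ literally equals $\bar e\cup-$ on these coefficients. That removes the need for any spectral-sequence comparison.

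With that identification in hand, the proof is essentially complete: for every $\Z/p\Z[G]$-module $M$ and every $i\ge k$, the map $\bar e\cup-:H^i(G,M)\to H^{i+n}(G,M)$ coincides with the isomorphism $e_\omega\cup-:H^i(G,M)\to H^{i+n}(G,M_\omega)$ under the identification $M_\omega\cong M$, hence is an isomorphism. Taking the same $k$ and $n$, and $e=\bar e\in H^n(G,\Z/p\Z)$, witnesses $\Z/p\Z$-periodicity of $G$. The main obstacle I anticipate is purely bookkeeping: verifying cleanly that the reduction of the \emph{cup product} map is the cup product with the reduced class (naturality of $\cup$ with respect to coefficient pairings), and confirming the module identification $M\otimes\hat\Z_p(\omega)\cong M$ respects the $G$-action so that it is genuinely an isomorphism of maps of cohomology groups, not merely of the source and target separately. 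Once those naturality checks are done, there is nothing deep left.
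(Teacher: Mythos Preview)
There is a genuine gap in your argument. You assert that ``the reduction mod $p$ of $\omega:G\to\hat\Z_p^\times$ lands in $1+p\hat\Z_p$, hence acts trivially on $\hat\Z_p/p$.'' This is not true in general: Definition~\ref{twisted} places no restriction whatsoever on the homomorphism $\omega$, so its composite $G\to\hat\Z_p^\times\to(\Z/p\Z)^\times$ can be any character of $G$ with values in the cyclic group of order $p-1$. Consequently $(\Z/p\Z)_\omega$ need not be the trivial $G$-module, $M_\omega$ need not be isomorphic to $M$ for a $\Z/p\Z[G]$-module $M$, and the reduced class $e_\omega(p)$ lies only in $H^n(G,(\Z/p\Z)_\omega)$, not in $H^n(G,\Z/p\Z)$. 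Your identification $M\otimes\hat\Z_p(\omega)\cong M$ therefore fails at exactly the point you flagged as ``bookkeeping'': it does not respect the $G$-action.

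The paper's proof repairs this with one extra step that your write-up is missing: it takes the $(p-1)$-fold cup power. Since $(\Z/p\Z)^\times$ has order $p-1$, the iterated twist $(\Z/p\Z)_{\omega}^{\otimes(p-1)}=(\Z/p\Z)_{\omega^{p-1}}$ \emph{is} the trivial module, so $e:=e_\omega(p)^{p-1}\in H^{n(p-1)}(G,\Z/p\Z)$, and cup product with $e$ is the $(p-1)$-fold iterate of the isomorphism $e_\omega\cup-$ on any $\Z/p\Z[G]$-module. Once you insert this cup power, the rest of your outline (naturality of cup product under coefficient reduction, viewing a $\Z/p\Z[G]$-module as a $p$-torsion $\Z G$-module of exponent $p$) goes through exactly as you describe.
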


Indeed, if $e_\omega\in H^n(G,\hat{\Z}_p(\omega))$ gives rise to twisted periodicity
as above
and $e_\omega(p)\in H^{n}(G,(\Z/p\Z)_\omega)$ denotes the mod p reduction of $e_\omega$, then 
$G$ has $\Z/p\Z$-periodic cohomology with periodicity generator the $(p-1)$-fold
cup product $e:=e_\omega(p)^{p-1}\in H^{n(p-1)}(G,\Z/p\Z)$.

\smallskip
If $M$ is a fixed $\Z G$-module which is $p$-torsion of finite exponent $p^{k+1}$,
then the $p^k(p-1)$-fold twisted module
$$M_{\omega^{p^k(p-1)}}:=((\cdots(M_\omega)\cdots)_\omega)_\omega$$
is naturally isomorphic as a $\Z G$-module to $M$. Therefore, if $G$ has
twisted $p$-periodic cohomology of some period $n$,
its cohomology with $M$ coefficients will actually be periodic in high dimensions $d\ge d_0(M)$, with period $n\cdot p^k(p-1)$. In general, it is not possible to choose the dimensions $d_0(M)$ so that they are bounded
by a number independent of $M$. This observation leads to an example of a group with twisted $p$-periodic cohomology
but not having $p$-periodic cohomology (cf. Example \ref{mainexample}). 

\smallskip
It is this
example together with the fundamental paper by Adem and Smith \cite{AS} which inspired our work. 
For background on groups acting freely on finite dimensional homology spheres, see \cite{MT} and \cite{T}.

\smallskip
  
\section{$\Z/p\Z$-dimension for spaces and $\Z/p\Z$-localization}\label{dim}
Similarly to the definition of the $\Z/p\Z$-cohomological dimension of groups,
one defines the $\Z/p\Z$-cohomological dimension for spaces as follows.
\begin{defi}\label{cdim}
Let $X$ be a connected $CW$-complex and $k>0$. 
The $\Z/p\Z$-co\-ho\-mo\-logical dimension $cd_{\Z/p\Z}(X)$ of $X$ is the 
smallest integer~$n$
such that $H^i(X,M)=0$ for all $\Z/p\Z[\pi_1(X)]$-modules $M$ and all $i>n$; if
there is no such $n$, we write $cd_{\Z/p\Z}(X)=\infty$. 
\end{defi} 

A simple induction on $k$ shows that if $cd_{\Z/p\Z}X<\infty$ then there exists an $i>0$
such that for all $k$ and all $\Z/p^k\Z[\pi_1(X)]$-modules $M$,
$H^j(X,M)=0$ for all $j>i$.

\bigskip

Bousfield constructed
in \cite{B} on the homotopy category of CW-com\-plexes the localization with respect to $H_*(-,\Z/p\Z)$,
which we call the $\Z/p\Z$-localization and which consists of a functorial map 
$$c(X):X\to X_{\Z/p\Z}$$
which is characterized by the following universal property :
for every $H_*(-,\Z/p\Z)$-isomorphism $f:X\to Z$ there is a unique map (up to homotopy)
$g : Z\to X_{\Z/p\Z}$ which is an $H_*(-,\Z/p\Z)$-isomorphism such that $g\circ f\simeq c(X)$.

$\quad\quad\quad\quad\quad\quad\xymatrixrowsep{3pc}\xymatrixcolsep{3pc}\xymatrix{
X\ar[rr]^-{H_*(-,\Z/p\Z)-iso\  f}\ar[d]_{c(X)} && Z\ar[dll]^{\exists !\  g}\\
X_{\Z/p\Z}\,.\\
}$

\noindent
If $X$ is simply connected (or nilpotent) and of finite type, then $X_{\Z/p\Z}$ agrees with Sullivan's
$p$-completion $\hat{X}_p$ (cf.~\cite{S}), and $X\to \hat{X}_p$ is profinite $p$-completion on the level of
homotopy groups.
\smallskip

Note that if $X$ is simply connected, one has $cd_{\Z/p\Z}X=cd_{\Z/p\Z}X_{\Z/p\Z}$, but for instance
$cd_{\Z/p\Z}S^1=1<cd_{\Z/p\Z}S^1_{\Z/p\Z}=\infty$ (because $\pi_1(S^1_{\Z/p\Z})$ contains a free
abelian subgroup of infinite rank).
\smallskip

By the {\sl{standard}} $\Z/p\Z$-homology
$n$-sphere we mean $S^n_{\Z/p\Z}$. 

\begin{lemm}\label{Z/pZ-sphere} Let $X$ be a $\Z/p\Z$-homology $n$-sphere.
Then $X_{\Z/p\Z}$ is homotopy equivalent to $S^n_{\Z/p\Z}$.
\end{lemm}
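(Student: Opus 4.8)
The plan is to produce, by hand, an $H_*(-;\Z/p\Z)$‑isomorphism $f\colon X\to S^n_{\Z/p\Z}$. Since $S^n_{\Z/p\Z}=(S^n)_{\Z/p\Z}$ is already $\Z/p\Z$‑local, the universal property then exhibits $S^n_{\Z/p\Z}$ as \emph{the} $\Z/p\Z$‑localization of $X$, so $X_{\Z/p\Z}\simeq S^n_{\Z/p\Z}$. (The case $n=0$ is disposed of separately: $X$ then has two path components, each with vanishing reduced $\Z/p\Z$‑homology, and $\Z/p\Z$‑localization commutes with disjoint unions, so $X_{\Z/p\Z}\simeq\ast\sqcup\ast=S^0=S^0_{\Z/p\Z}$. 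From now on assume $n\ge 1$ and $X$ connected.)

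The technical core is the claim that $X$ has the $\Z/p^k\Z$‑cohomology of $S^n$, compatibly in $k$: one has $H^i(X;\Z/p^k\Z)=0$ for $i\notin\{0,n\}$ and $H^i(X;\Z/p^k\Z)\cong\Z/p^k\Z$ for $i\in\{0,n\}$, with the transition maps for varying $k$ the evident surjections. This follows from $H_*(X;\Z/p\Z)\cong H_*(S^n;\Z/p\Z)$ by the universal coefficient theorem: vanishing of $H_m(X;\Z/p\Z)$ forces $H_m(X;\Z)$ to be $p$‑divisible and $H_{m-1}(X;\Z)$ to be $p$‑torsion‑free, which kills both summands of $H^m(X;\Z/p^k\Z)$ for all $k$; and in the single nonzero positive degree $n$ the fact that $\dim_{\F_p}H^n(X;\Z/p\Z)=1$ propagates to $H^n(X;\Z/p^k\Z)\cong\Z/p^k\Z$ (one checks the two cases according as this $1$ comes from the $\mathrm{Hom}$‑ or the $\mathrm{Ext}$‑summand). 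Passing to the limit over $k$, the $\varprojlim^1$‑term vanishes because $H^{n-1}(X;\Z/p^k\Z)=0$, and one obtains $H^n(X;\hat{\Z}_p)\cong\hat{\Z}_p$ together with $H^i(X;\hat{\Z}_p)=0$ for $i\notin\{0,n\}$.

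Now fix a topological generator $\xi\in H^n(X;\hat{\Z}_p)$. If $n=1$, then $S^1_{\Z/p\Z}$ is Sullivan's $p$‑completion of the nilpotent finite‑type space $S^1=K(\Z,1)$, hence $S^1_{\Z/p\Z}\simeq K(\hat{\Z}_p,1)$, and $\xi$ \emph{is} a map $f\colon X\to K(\hat{\Z}_p,1)=S^1_{\Z/p\Z}$. If $n\ge 2$, then $S^n_{\Z/p\Z}\simeq(S^n)^{\wedge}_p$ is $(n-1)$‑connected with $\pi_n\cong\hat{\Z}_p$; view $\xi$ as a map $X\to K(\hat{\Z}_p,n)$, the $n$‑th Postnikov section of $(S^n)^{\wedge}_p$, and lift it through the Postnikov map $(S^n)^{\wedge}_p\to K(\hat{\Z}_p,n)$ by obstruction theory. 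The successive obstructions lie in the groups $H^{i+1}\bigl(X;\pi_i((S^n)^{\wedge}_p)\bigr)$ for $i>n$, hence in cohomological degrees $i+1\notin\{0,n\}$; since each $\pi_i((S^n)^{\wedge}_p)$ is a finitely generated $\hat{\Z}_p$‑module (a finite $p$‑group, with one free summand when $n$ is even and $i=2n-1$), the computation of the previous paragraph shows that all these groups vanish, so the lift $f\colon X\to(S^n)^{\wedge}_p$ exists.

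In all cases $f$ induces an isomorphism on $H^n(-;\Z/p\Z)$, since it detects the mod‑$p$ reduction of $\xi$, which generates $H^n(X;\Z/p\Z)$; as both $X$ and $S^n_{\Z/p\Z}$ have the $\Z/p\Z$‑homology of $S^n$, the map $f$ is then an $H_*(-;\Z/p\Z)$‑isomorphism, and the first paragraph finishes the proof. I expect the main obstacle to be the second paragraph: $X$ is only a $CW$‑complex, so $H_{n-1}(X;\Z)$ and $H_n(X;\Z)$ can be genuinely pathological (for example, $H_{n-1}(X;\Z)$ may have a Prüfer summand $\Z/p^{\infty}\Z$), and one must verify with some care — most cleanly by running the Bockstein spectral sequence, which here degenerates for dimension reasons — that the $\Z/p^k\Z$‑cohomology of $X$ is nevertheless as thin, and as natural in $k$, as that of the sphere.
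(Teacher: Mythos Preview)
Your proof is correct, but it proceeds quite differently from the paper's argument.  The paper works directly with $X_{\Z/p\Z}$ and builds a map \emph{into} it from the sphere: for $n>1$ it first shows $\pi_1(X_{\Z/p\Z})=0$ using Bousfield's result that an $H\Z/p\Z$-local group with trivial $H_1(-;\Z/p\Z)$ is trivial, then argues that $H_i(X_{\Z/p\Z};\Z)$ is a $\Q$-vector space for $1<i<n$ and hence zero (because the only $H\Z/p\Z$-local $\Q$-vector space is $0$), so that Hurewicz produces $f\colon S^n\to X_{\Z/p\Z}$ hitting a generator of $H_n(-;\Z/p\Z)$; the case $n=1$ is handled by choosing $S^1\to X$ representing a generator of $\pi_1(X)_{ab}\otimes\Z/p\Z$.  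You instead go the other way, constructing a map \emph{out of} $X$ into $S^n_{\Z/p\Z}$ via an explicit computation of $H^*(X;\Z/p^k\Z)$ and $H^*(X;\hat{\Z}_p)$ followed by obstruction theory up the Postnikov tower of $(S^n)^{\wedge}_p$.  The paper's route is shorter and more conceptual, leaning on structural facts about $H\Z/p\Z$-local spaces and groups; yours is more elementary in that it avoids those facts entirely, trading them for the Bockstein/UCT analysis you flag as delicate and for standard obstruction theory.

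Two minor remarks on your write-up.  First, for $n=1$ your stated reason for the vanishing of the $\varprojlim^1$ term (``$H^{n-1}(X;\Z/p^k\Z)=0$'') is false, since $H^0(X;\Z/p^k\Z)=\Z/p^k\Z$; the conclusion is still correct because that tower has surjective transition maps and hence satisfies Mittag--Leffler.  Second, in the inductive step showing $H^n(X;\Z/p^{k+1}\Z)\cong\Z/p^{k+1}\Z$, the cleanest way to see cyclicity is to observe that the coefficient inclusion $\Z/p^k\Z\overset{\cdot p}{\hookrightarrow}\Z/p^{k+1}\Z$ identifies the image of $H^n(X;\Z/p^k\Z)$ in $H^n(X;\Z/p^{k+1}\Z)$ with $p\cdot H^n(X;\Z/p^{k+1}\Z)$, so the cokernel $H^n(X;\Z/p\Z)\cong\Z/p\Z$ computes $H^n(X;\Z/p^{k+1}\Z)/p$, forcing the group to be cyclic.
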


\begin{proof}
Assume that
$H_*(X,\Z/p\Z)\cong H_*(S^n,\Z/p\Z)$. We first consider the case of $n=1$. It follows that
$\pi_1(X)_{ab}\otimes \Z/p\Z$ $\cong$ $\Z/p\Z$. Choose an $f:S^1\to X$ mapping to a generator of
$\pi_1(X)_{ab}\otimes \Z/p\Z$. Then $f$ induces an isomorphism in homology with $\Z/p\Z$-coefficients.
It follows that $f$ induces a homotopy equivalence $S^1_{\Z/p\Z}\to X_{\Z/p\Z}$.
Now assume that $n>1$. Since $H_1(X,\Z/p\Z)\cong H_1(X_{\Z/p\Z}, \Z/p\Z)=0$, we also have
$H_1(\pi_1(X_{\Z/p\Z}),{\Z/p\Z})$=$0$. But $\pi_1(X_{\Z/p\Z})$
is a $H\Z/p\Z$-local group, thus $\pi_1(X_{\Z/pZ})=0$ (see Theorem 5.5 of \cite{B}).
We proceed by showing that $X_{\Z/p\Z}$ is $(n-1)$-connected. Let $\pi_i(X_{\Z/p\Z})$ be the first
non-vanishing homotopy group of $X_{\Z/p\Z}$, $i>1$. Because a $H_*(-,\Z_{(p)})$-isomorphism is also
an $H_*(-,\Z/p\Z)$-isomorphism, $X_{\Z/p\Z}$ is $H\Z_{(p)}$-local and therefore its homology groups with
$\Z$-coefficients are uniquely $q$-divisible for $q$ prime to $p$. Moreover, for $n>i>1$, multiplication by $p$
is bijective on $H_i(X_{\Z/p\Z},\Z)$, because $H_j(X_{\Z/p\Z},\Z/p\Z)=0$ for $j=i-1,i$. Thus $H_i(X_{\Z/p\Z},\Z)$
is a $\Q$-vector space for $1<i<n$. Since the only $\Q$-vector space, which is $H\Z/p\Z$-local as an abelian group,
is the trivial one, and because the homotopy groups of $X_{\Z/p\Z}$ are $H\Z/p\Z$-local, we conclude from the
Hurewicz Theorem that $X_{\Z/p\Z}$ must be $(n-1)$-connected. It follows that the natural maps
$$\pi_n(X_{\Z/p\Z})\to H_n(X_{\Z/p\Z},\Z)\to H_n(X_{\Z/p\Z},\Z,p\Z)\cong \Z/p\Z$$
are both surjective. Choose an $f: S^n\to X_{\Z/p\Z}$ which maps to a generator of $H_n(X_{\Z/p\Z},\Z/p\Z)$ and it
follows that $f$ induces a homotopy equivalence $S^n_{\Z/p\Z}\to X_{\Z/p\Z}$. 
\end{proof}
\smallskip
There is also a fiberwise version of $\Z/p\Z$-localization (see \cite{M} for details). If 
$$X\to E\to B$$ is a fibration
of connected CW-complexes, one can construct a new fibration
$$X_{\Z/p\Z}\to E^f_{\Z/p\Z}\to B\,,$$ together with a map $E\to E^f_{\Z/p\Z}$ over $B$, which restricts on
the fibers to $\Z/p\Z$-localization $X\to X_{\Z/p\Z}$. 
Using the Serre spectral sequence, we conclude the following. If $F\to E\to B$ is a fibration
of connected $CW$-complexes with $F$ simply connected, then $cd_{\Z/p\Z}E=cd_{\Z/p\Z}E^f_{\Z/p\Z}$.
Also, if the fiber $F$ is a $\Z/p\Z$-homology sphere, then fiberwise $\Z/p\Z$-localization
yields a fibration with fiber a standard $\Z/p\Z$-homology sphere $S^n_{\Z/p\Z}\to E^f_{\Z/p\Z}\to B.$

\section{Fibrations, orientation and Euler class}

If $F\to E\to B$ is a fibration of connected $CW$-complexes, then $\pi_1(E)\to \pi_1(B)$
is surjective and lifting of loops defines a natural map $\theta: \pi_1(B)\to [F,F]$, a 
{\sl homotopy action} of $\pi_1(B)$ on $F$.

\begin{defi}\label{orientable}
Let $F\to E\to B$ be a fibration of connected CW-complexes. The fibration is
called orientable, if the associated homotopy action $\pi_1(B)\to [F,F]$ is
trivial. We call the fibration $H\Z/p^k\Z$-orientable, if $\pi_1(B)$ acts trivially
on $H_*(F,\Z/p^k\Z)$.
\end{defi}
Clearly, if a fibration is orientable, it is $H\Z/p^k\Z$-orientable for all $k$.

\begin{defi}\label{R-spherical}
Let $
 F\to E\to B$
be a fibration of connected CW-complexes. 
We call such a fibration {\sl{$\Z/p\Z$-spherical}}  
in case $F$ is a $\Z/p\Z$-homology
sphere (or, equivalently, if $F_{\Z/p\Z}\simeq S^n_{\Z/p\Z}$ for some $n>0$).
\end{defi}

We will make use of the following   
observation.

\begin{lemm}\label{spacetofibration}
For a group $G$ the following are equivalent.
\begin{itemize}
\item[a)] There exists a simply connected free $G$-$CW$-complex $X$ which is a
$\Z/p\Z$-homology sphere satisfying $cd_{\Z/p\Z}X/G<\infty$.
\item[b)] There exists a $\Z/p\Z$-spherical fibration 
$F\to E\to K(G,1)$ with $F$ simply connected and $cd_{\Z/p\Z}E<\infty$.
\end{itemize}
\end{lemm}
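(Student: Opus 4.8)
The plan is to establish the equivalence by passing between a free $G$-$CW$-complex and the associated Borel-type fibration, using the fact that for a free $G$-$CW$-complex $X$ the quotient map $X\to X/G$ is a covering with $\pi_1(X/G)$ an extension of $G$ by $\pi_1(X)$, and that when $X$ is simply connected this gives $X/G\simeq$ a space with fundamental group $G$.

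\begin{proof}
Suppose first that (a) holds, so we have a simply connected free $G$-$CW$-complex $X$ which is a $\Z/p\Z$-homology sphere with $cd_{\Z/p\Z}X/G<\infty$. Since $X$ is simply connected and the $G$-action is free, the quotient $X\to X/G$ is a universal covering, so $\pi_1(X/G)\cong G$ and the higher homotopy groups of $X/G$ agree with those of $X$. Let $B=K(G,1)$ and choose a map $X/G\to B$ inducing the identity on $\pi_1$; converting it to a fibration, we obtain a fibration $F\to X/G\to B$ whose fibre $F$ is the homotopy fibre. Since $\pi_1(B)=G$ acts on the total space through deck transformations of $X$ and $B$ is aspherical, the fibre $F$ is the universal cover of $X/G$ up to the relevant connectivity, namely $F\simeq X$ as a simply connected space; in particular $F$ is a simply connected $\Z/p\Z$-homology sphere, so the fibration is $\Z/p\Z$-spherical with simply connected fibre, and $cd_{\Z/p\Z}E=cd_{\Z/p\Z}(X/G)<\infty$, giving (b).

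Conversely, suppose (b) holds: we have a $\Z/p\Z$-spherical fibration $F\to E\to K(G,1)$ with $F$ simply connected and $cd_{\Z/p\Z}E<\infty$. Pulling back along the universal cover $EG\to BG=K(G,1)$, we obtain a fibration $F\to \widetilde{E}\to EG$ with a free $G$-action on $\widetilde{E}$ covering the trivial action on the contractible space $EG$, and $\widetilde{E}/G=E$. Since $EG$ is contractible, the projection $\widetilde{E}\to F$ is a homotopy equivalence, so $\widetilde{E}$ is a simply connected $\Z/p\Z$-homology sphere. After replacing $\widetilde{E}$ by a $G$-$CW$-complex $G$-homotopy equivalent to it (using equivariant $CW$-approximation, and noting the $G$-action is free so this preserves freeness), we get a simply connected free $G$-$CW$-complex $X$ which is a $\Z/p\Z$-homology sphere; and $cd_{\Z/p\Z}(X/G)=cd_{\Z/p\Z}E<\infty$ since $X/G\simeq E$. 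This is exactly (a).
\end{proof}

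The step I expect to require the most care is the identification, in the $(b)\Rightarrow(a)$ direction, of the pulled-back total space $\widetilde{E}$ as a space $G$-homotopy equivalent to a \emph{free} $G$-$CW$-complex with the right finiteness and homology properties: one must invoke equivariant $CW$-approximation and check that it does not disturb either the freeness of the action (automatic, since freeness is detected on cells and the approximation can be taken cellwise free over the base) or the finiteness of $cd_{\Z/p\Z}$ of the quotient (which follows because $X/G$ and $E$ are homotopy equivalent, and $cd_{\Z/p\Z}$ depends only on the homotopy type via Definition \ref{cdim}). The homological computation that $F\simeq\widetilde E$ and $F\simeq X$ in each direction is routine once one observes that fibering over a contractible base (resp.\ a $K(G,1)$ with simply connected fibre) makes the relevant map a homotopy equivalence (resp.\ a universal covering).
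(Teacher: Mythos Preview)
Your proof is correct and follows essentially the same route as the paper. The paper's argument is terse: for $a)\Rightarrow b)$ it takes the classifying map $f:X/G\to K(G,1)$ of the universal cover and notes that its homotopy fibre is $G$-homotopy equivalent to $X$; for $b)\Rightarrow a)$ it simply observes that the universal cover of $E$ is $G$-homotopy equivalent to $F$. Your pullback along $EG\to BG$ is exactly this universal cover (since $F$ simply connected forces $\pi_1(E)\cong G$), and your added care about equivariant $CW$-approximation and the homotopy invariance of $cd_{\Z/p\Z}$ makes explicit what the paper leaves implicit.
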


\begin{proof}
Let $X$ be as in $a)$ and $f: X/G\to K(G,1)$ the classifying map for the universal cover $X$
of $X/G$. Then the homotopy fiber of $f$ is $G$-homotopy equivalent to $X$, thus $b)$ holds.
If $F\to E\to K(G,1)$ is as in $b)$, the universal cover of $E$ is $G$-homotopy equivalent to $F$, thus $a)$ holds. 
\end{proof}

Note that if $X$ any $\Z/p\Z$-homology sphere, it is also a $\Z/p^k\Z$-homo\-lo\-gy sphere 
for $k>1$ as 
one easily sees by induction on $k$.
Thus, for a $\Z/p\Z$-spherical fibration 
$$F\to E\to B$$ 
as in Definition \ref{R-spherical},
the $\pi_1(B)$-module $H_n(F,\Z/p^k\Z)$ 
is isomorphic to a twisted module $(\Z/p^k\Z)_\omega$, where
$\omega: \pi_1(B)\to (\Z/p^k\Z)^\times$ corresponds to the action of $\pi_1(B)$ on $H_n(F,\Z/p^k\Z)$.
(If we need to emphasize the dependence of $\omega$ on $k$, we write $\omega(k)$ in place of $\omega$).
We call the twisted module $(\Z/p^k\Z)_\omega$ the $k$-orientation module.
The fibration is {{H$\Z/p^k\Z$-orientable}}
in the sense of Definition \ref{orientable}, if the $k$-orientation
module is the trivial $\Z/p^k\Z[\pi_1(B)]$-module $\Z/p^k\Z$. We write ${\bar{\omega}}$ for the map
$\pi_1(B)\to (\Z/p^k\Z)^\times$ given by ${\bar{\omega}}(x)=\omega(x^{-1})$,
and more generally $\omega^n$, for the map with $\omega^n(x)=\omega(x^n)$, $n\in\Z$.
For any $\Z/p^k\Z[\pi_1(B)]$-module $M$ we write $M_\omega$ for $M\otimes (\Z/p^k\Z)_\omega$
with diagonal $\pi_1(B)$-action $x\cdot (m\otimes z)=xm\otimes\omega(x)z$. Similarly, we consider
the diagonal action on $Hom_{\Z/p^k\Z}((\Z/p^k\Z)_\omega),M)$ given by $(xf)(z)=x\cdot f(\bar{\omega}(x)z)$.
Therefore, there is a natural isomorphism of $\Z/p^k\Z[\pi_1(B)]$-modules
$$H^n(F,M)\cong Hom(H_n(F,\Z/p^k\Z),M)\cong Hom((\Z/p^k\Z)_\omega,M)
\cong M_{\bar{\omega}}\,.$$
In the case of a $\Z/p\Z$-spherical fibration $F\to E\to B$, 
the only possibly non-zero differential in the Serre spectral sequence
with coefficients in a $\Z/p^k\Z[\pi(E)]$-module $K$,

$$E_2^{i,\ell}=H^i(B,H^\ell(F,K))\Longrightarrow H^{i+\ell}(E,K),$$
is the transgression differential
$$d_{n+1}: E^{i,n}_2=E^{i,n}_{n+1}\to E^{i+n+1,0}_{n+1}=E^{i+n+1,0}_2.$$
Taking for $K$ the $k$-orientation module $(\Z/p^k\Z)_\omega$
and choosing $i=0$, this yields
$$d_{n+1}: \Z/p^k\Z=H^0(B,\Z/p^k\Z)\to H^{n+1}(B,(\Z/p^k\Z)_{\omega})\,,$$
and the image of $1\in \Z/p^k\Z$, $d_{n+1}(1):=e(k)_\omega\in H^{n+1}(B, (\Z/p^k\Z)_\omega)$
is called the {\sl{twisted}} $\Z/p^k\Z$-Euler class of the given $\Z/p\Z$-spherical
fibration. 
Let now $M$ be an arbitrary $\Z/p^k\Z[\pi_1(B)]$-module and choose $K=M_\omega$. Thus $H^n(F,M_\omega)=M$
and  
$$d_{n+1}: E^{i,n}_2=H^i(B,M)\to H^{i+n+1}(B,M_\omega)=E^{i+n+1,0}_2\,$$
is given by the cup-product with $e(k)_\omega$. The kernel and image of $d_{n+1}$ are determined as follows:
$$E^{i,n}_\infty=\ker{d_{n+1}}\subset E^{i,n}_2=H^i(B,M)\stackrel{d_{n+1}}\longrightarrow H^{i+n+1}(B,M_\omega)
\,,$$
respectively
$$
H^i(B,M)\stackrel{d_{n+1}}\longrightarrow H^{i+n+1}(B,M_\omega)=E^{i+n+1,0}_2
\twoheadrightarrow\operatorname{coker}{d_{n+1}}=E^{i+n+1,0}_\infty. $$
The natural surjection $\sigma: H^{i+n}(E,M)\to E^{i,n}_\infty$ has as kernel the subgroup
$E^{i+n,0}_\infty$ and, by splicing things together one gets the {\sl{Gysin-sequence}}
$$\to\! H^{i+n}(E,M)\!\overset{\sigma}\rightarrow\! H^i(B,{M}\,)\!\stackrel{e(k)_\omega\cup -}\longrightarrow\!
H^{i+n+1}(B,M_\omega)\!\to \!H^{i+n+1}(E,M_\omega)\!\to$$
One concludes that for large values of $i$ and all $\Z/p^k\Z[\pi_1(B)]$-modules $M$, cup product with $e(k)_\omega$
induces for all $k$ isomorphisms
$$e(k)_\omega\cup - : H^i(B,M)\overset{\cong}\longrightarrow H^{i+n+1}(B,M_\omega)$$
if and only there exists a $j_0$ such that for all $j>j_0$, $H^j(E,M)=0$ for all $\Z/p^k\Z[\pi_1(B)]$-modules $M$
and all $k$ (here $M$ is viewed as $\pi_1(E)$-modules via $\pi_1(E)\to \pi_1(B)$).
In case $F$ is simply connected, this amounts to $cd_{\Z/p\Z}E<\infty$.

\begin{coro}{\label{c1}}
Let $F\to E\to B$ be a $\Z/p\Z$-spherical fibration of $CW$-complexes with $B$ connected and $F$
simply connected, with twisted $\Z/p^k\Z$-Euler classes
$e(k)_{\omega(k)}\in H^{n}(B,(\Z/p^k\Z)_{\omega(k)})$, $k\ge 1$. Then
the following are equivalent.
\begin{itemize}
\item[1)] $cd_{\Z/p\Z}E<\infty$;
\item[2)] there exists $i_0$ such for all $i>i_0$ and all $k\ge 1$
$$e(k)_{\omega(k)}\cup - : H^i(B,M)\longrightarrow H^{i+n}(B,M_{\omega(k)})$$
is an isomorphism for all $\Z/p^k\Z[\pi_1(B)]$-modules $M$.
\end{itemize}
\end{coro}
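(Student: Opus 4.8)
The plan is to obtain the corollary from the Gysin-sequence analysis carried out immediately before the statement, once it is packaged uniformly in $k$. Two preliminary points are needed. First, since $F$ is simply connected, the homotopy exact sequence of $F\to E\to B$ gives $\pi_1(E)\cong\pi_1(B)$, so a $\Z/p^k\Z[\pi_1(B)]$-module is the same thing as a $\Z/p^k\Z[\pi_1(E)]$-module and may be used as a coefficient system on $E$. Second, being a $\Z/p\Z$-homology sphere, $F$ is a $\Z/p^k\Z$-homology sphere for every $k\ge 1$ (the remark following Lemma~\ref{spacetofibration}); hence for each fixed $k$ and each $\Z/p^k\Z[\pi_1(B)]$-module $M$ the Serre spectral sequence of $F\to E\to B$ with coefficients $M_{\omega(k)}$ has exactly two nonzero rows (fiber degrees $0$ and $n-1$), its only nonzero differential is the transgression, and by the computation preceding the corollary that transgression is cup product with $e(k)_{\omega(k)}$. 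Splicing the resulting two-step filtrations produces the Gysin sequence
\[
\cdots\to H^{i+n-1}(E,M_{\omega(k)})\xrightarrow{\ \sigma\ }H^{i}(B,M)\xrightarrow{\ e(k)_{\omega(k)}\cup-\ }H^{i+n}(B,M_{\omega(k)})\to H^{i+n}(E,M_{\omega(k)})\xrightarrow{\ \sigma\ }H^{i+1}(B,M)\to\cdots,
\]
valid for every $k\ge 1$ and every $\Z/p^k\Z[\pi_1(B)]$-module $M$.

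For $1)\Rightarrow 2)$ I would invoke the elementary remark of Section~\ref{dim}: if $cd_{\Z/p\Z}E<\infty$, there is an integer $i_{1}$ with $H^{j}(E,N)=0$ for all $j>i_{1}$, all $k\ge 1$, and all $\Z/p^k\Z[\pi_1(E)]$-modules $N$. Applying this with $N=M_{\omega(k)}$ in the Gysin sequence: as soon as $i>i_{1}-n+1$, both $E$-terms $H^{i+n-1}(E,M_{\omega(k)})$ and $H^{i+n}(E,M_{\omega(k)})$ vanish, so $e(k)_{\omega(k)}\cup-\colon H^{i}(B,M)\to H^{i+n}(B,M_{\omega(k)})$ is an isomorphism. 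Since $i_{1}$ is independent of $k$ and of $M$, the single bound $i_{0}:=i_{1}-n+1$ works for all $k$ and all $M$, which is $2)$.

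For $2)\Rightarrow 1)$, fix $i_{0}$ as in $2)$, a prime power $p^{k}$, and an arbitrary $\Z/p^k\Z[\pi_1(B)]$-module $N$. As $\omega(k)$ takes values in the units, $-\otimes(\Z/p^k\Z)_{\omega(k)}$ is an auto-equivalence of the category of $\Z/p^k\Z[\pi_1(B)]$-modules, so we may write $N=M_{\omega(k)}$ with $M:=N_{\bar\omega(k)}$. For $i>i_{0}+1$ the map $e(k)_{\omega(k)}\cup-$ is an isomorphism at level $i$ and at level $i-1$: injectivity at level $i$ forces $\sigma\colon H^{i+n-1}(E,N)\to H^{i}(B,M)$ to be the zero map, while surjectivity at level $i-1$ forces the preceding map $H^{i+n-1}(B,N)\to H^{i+n-1}(E,N)$ (whose kernel is the image of $e(k)_{\omega(k)}\cup-$ at level $i-1$) to be the zero map; by exactness of the Gysin sequence at $H^{i+n-1}(E,N)$ these two facts give $H^{i+n-1}(E,N)=0$. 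Hence $H^{j}(E,N)=0$ for all $j>i_{0}+n$, for every such $N$ and every $k$; taking $k=1$ gives $cd_{\Z/p\Z}E\le i_{0}+n<\infty$, which is $1)$.

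The argument is essentially bookkeeping on top of what precedes the statement, and there is no genuine obstacle; the two things to keep straight are the reindexing $M\leftrightarrow M_{\omega(k)}$, needed so that quantification over all modules on the spectral-sequence side matches quantification over all modules on the $E$-side, and the degree normalization (the Euler class $d_{n+1}(1)$ of the preceding discussion, where $F\simeq_{\Z/p\Z}S^{n}$, is here relabeled to sit in degree $n$). Uniformity in $k$ requires no separate work: it is delivered by the remark of Section~\ref{dim} in the first direction and is built into the hypothesis $2)$ in the second.
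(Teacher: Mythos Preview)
Your proof is correct and follows essentially the same approach as the paper: it is a direct reading of the Gysin sequence developed in the discussion immediately preceding the corollary, together with the remark from Section~\ref{dim} that $cd_{\Z/p\Z}E<\infty$ is equivalent to uniform vanishing of $H^j(E,N)$ for all $\Z/p^k\Z[\pi_1(E)]$-modules $N$. You have simply spelled out the exactness bookkeeping for $2)\Rightarrow 1)$ and the degree reindexing more explicitly than the paper does.
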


In the situation of Corollary \ref{c1}, it follows from the naturality of the Serre
spectral sequence that the twisted $\Z/p^k\Z$-Euler classes $e(k)_{\omega(k)}$
are the reduction mod $p^k$ of a class $e_\omega\in H^n(B,\hat{\Z}_p(\omega))$,
where $\hat{\Z}_p(\omega)$ is isomorphic to $\pi_{n-1}(F_{\Z/p\Z})\cong\pi_{n-1}(S^{n-1}_{\Z/p\Z})$
as a $\pi_1(B)$-module.
Therefore, the following holds.

\begin{coro}\label{fibration to twisted}
If there exists a $\Z/p\Z$-spherical fibration of $CW$-com\-plexes $F\to E\to K(G,1)$ with $F$
simply connected and $cd_{\Z/p\Z}E<\infty$, then $G$ has twisted $p$-periodic cohomology.
\end{coro}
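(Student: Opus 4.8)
The plan is to combine Corollary~\ref{c1} with the remark immediately preceding the present corollary, applied to the fibration with base $B=K(G,1)$, so that $\pi_1(B)=G$ and $\Z/p^k\Z[\pi_1(B)]$-modules are exactly $\Z/p^k\Z[G]$-modules.

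First I would invoke Corollary~\ref{c1}: since $cd_{\Z/p\Z}E<\infty$, condition $(1)$ holds, so condition $(2)$ holds as well, producing an integer $i_0$ such that for every $i>i_0$, every $k\ge 1$ and every $\Z/p^k\Z[G]$-module $M$ the cup product with the twisted $\Z/p^k\Z$-Euler class
$$e(k)_{\omega(k)}\cup -\colon H^i(G,M)\longrightarrow H^{i+n}(G,M_{\omega(k)})$$
is an isomorphism. By the remark following Corollary~\ref{c1}, the classes $e(k)_{\omega(k)}$ are the reductions mod $p^k$ of a single class $e_\omega\in H^n(G,\hat{\Z}_p(\omega))$, where $\omega\colon G\to\hat{\Z}_p^\times$ describes the $G$-action on $\pi_{n-1}(F_{\Z/p\Z})$; in particular $\omega(k)$ is the reduction of $\omega$ modulo $p^k$ for every $k$, so a single pair $(\omega,e_\omega)$ governs all of the maps above simultaneously.

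Next I would match up the two cup products on $p$-torsion coefficients. Let $M$ be a $\Z G$-module that is $p$-torsion of finite exponent and choose $k$ with $p^kM=0$; then $M$ is a $\Z/p^k\Z[G]$-module and its group cohomology is unchanged under this reinterpretation. Because $M$ is killed by $p^k$, the natural map $\hat{\Z}_p(\omega)\otimes_\Z M\to(\Z/p^k\Z)_{\omega(k)}\otimes_\Z M$ is an isomorphism of $\Z G$-modules, which identifies $M_\omega$ with $M_{\omega(k)}$. Naturality of the cup product with respect to the coefficient reduction $\hat{\Z}_p(\omega)\to(\Z/p^k\Z)_{\omega(k)}$ then shows that
$$e_\omega\cup -\colon H^i(G,M)\longrightarrow H^{i+n}(G,M_\omega)$$
coincides, under this identification, with $e(k)_{\omega(k)}\cup -$, hence is an isomorphism for all $i>i_0$. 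Setting the constant $k$ of Definition~\ref{twisted} equal to $i_0+1>0$, and recalling that $n>0$, this says precisely that $G$ has twisted $p$-periodic cohomology with periodicity generator $e_\omega$.

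The only slightly delicate point is the last matching step: one must verify that the tensor identification $\hat{\Z}_p\otimes_\Z M\cong M$ for $p^k$-torsion $M$ is compatible with the diagonal $G$-actions defining $M_\omega$ and $M_{\omega(k)}$ and with the cup-product pairings, and that a single $(\omega,e_\omega)$ works uniformly over all exponents $p^k$ (this uniformity is exactly what the remark after Corollary~\ref{c1} supplies via naturality of the Serre spectral sequence). Everything else is a direct application of Corollary~\ref{c1}.
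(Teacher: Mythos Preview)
Your argument is correct and follows exactly the route the paper takes: the corollary is stated immediately after the remark that the $e(k)_{\omega(k)}$ lift to a single class $e_\omega\in H^n(B,\hat{\Z}_p(\omega))$, and the paper's entire proof is the word ``Therefore''. You have simply made explicit the identification of $M_\omega$ with $M_{\omega(k)}$ for $p^k$-torsion $M$ and the compatibility of the cup products, which the paper leaves to the reader.
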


The following lemma permits us to pass from $\Z/p\Z$-spherical fibrations to $H\Z/p\Z$-orientable ones.

\begin{lemm}\label{c2} Let $F_1\to E_1\to B$ be a $\Z/p\Z$-spherical fibration of $CW$-complexes with $B$ connected and $F_1$ simply connected, such that $cd_{\Z/p\Z}E_1<\infty$.
Then the $(p-1)$-fold fiberwise join yields a $H\Z/p\Z$-orientable $\Z/p\Z$-spherical fibration
$F_2\to E_2\to B$ over the same base, with $cd_{\Z/p\Z}E_2<\infty$.
\end{lemm}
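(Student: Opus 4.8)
The plan is to take $E_2$ to be the fiberwise join of $p-1$ copies of $F_1\to E_1\to B$, so that $E_2\to B$ is again a fibration (which may be arranged within $CW$-complexes) with fibre $F_2:=F_1*\cdots*F_1$, the join of $p-1$ copies of $F_1$. If $p=2$ there is nothing to prove, since then $E_2=E_1$ and $(\Z/2\Z)^\times$ is trivial, so assume $p$ odd. Three properties of $F_2$ are immediate. Since $F_1$ is path-connected, $F_2$ is simply connected. Since $\Z/p\Z$ is a field, the join formula $\tilde H_*(X*Y;\Z/p\Z)\cong\big(\tilde H_*(X;\Z/p\Z)\otimes_{\Z/p\Z}\tilde H_*(Y;\Z/p\Z)\big)$ (shifted up by one) shows, by iteration, that $F_2$ is again a $\Z/p\Z$-homology sphere; hence $F_2\to E_2\to B$ is a $\Z/p\Z$-spherical fibration with simply connected fibre. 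Finally, the homotopy action $\pi_1(B)\to[F_2,F_2]$ attached to $E_2\to B$ is $g\mapsto\theta(g)*\cdots*\theta(g)$ ($p-1$ factors), where $\theta\colon\pi_1(B)\to[F_1,F_1]$ is the homotopy action attached to $E_1\to B$; by the same join formula its effect on the top $\Z/p\Z$-homology of $F_2$ is $\omega^{p-1}$, where $\omega:=\omega(1)\colon\pi_1(B)\to(\Z/p\Z)^\times$ describes the action on $H_*(F_1,\Z/p\Z)$. As $(\Z/p\Z)^\times$ has order $p-1$, $\omega^{p-1}$ is trivial, and the remaining $\Z/p\Z$-homology of $F_2$ is that of a point, so $\pi_1(B)$ acts trivially on $H_*(F_2,\Z/p\Z)$; thus $F_2\to E_2\to B$ is $H\Z/p\Z$-orientable.

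It remains to prove $cd_{\Z/p\Z}E_2<\infty$. I would argue by induction on the number $j$ of copies in the iterated fiberwise join $E_1^{(j)}:=E_1*_B\cdots*_B E_1$, the case $j=1$ being the hypothesis. Using associativity of the join, $E_1^{(j)}$ is the homotopy pushout of the span $E_1^{(j-1)}\xleftarrow{\mathrm{pr}_1}E_1^{(j-1)}\times_B E_1\xrightarrow{\mathrm{pr}_2}E_1$, in which all three spaces fibre over $B$ with simply connected fibres ($F_1^{(j-1)}$, $F_1^{(j-1)}\times F_1$, and $F_1$ respectively); hence each has fundamental group $\pi_1(B)$, the two projections induce the identity on it, and (by van Kampen) $\pi_1(E_1^{(j)})=\pi_1(B)$, so there is a Mayer--Vietoris long exact sequence in cohomology with coefficients in an arbitrary $\Z/p\Z[\pi_1(B)]$-module $M$. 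The middle term is the pullback of $E_1\to B$ along $E_1^{(j-1)}\to B$, hence the total space of a fibration over $E_1^{(j-1)}$ with fibre $F_1$; since $F_1$ is a simply connected $\Z/p\Z$-homology sphere, $cd_{\Z/p\Z}F_1<\infty$, and the Serre spectral sequence gives $cd_{\Z/p\Z}(E_1^{(j-1)}\times_B E_1)\le cd_{\Z/p\Z}E_1^{(j-1)}+cd_{\Z/p\Z}F_1<\infty$ by induction. Feeding the finiteness of $cd_{\Z/p\Z}E_1$, $cd_{\Z/p\Z}E_1^{(j-1)}$ and $cd_{\Z/p\Z}(E_1^{(j-1)}\times_B E_1)$ into the Mayer--Vietoris sequence yields $cd_{\Z/p\Z}E_1^{(j)}<\infty$, closing the induction; with $j=p-1$ this is the assertion. (The remark after Definition \ref{cdim} then also gives a bound on $H^*(E_2,N)$ uniform over all $\Z/p^k\Z[\pi_1(B)]$-modules $N$, if needed later.)

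The step that I expect to need the most care is foundational rather than conceptual: setting up the fiberwise join so that it is a $CW$-fibration with fibre the join of the fibres, is associative, admits the homotopy-pushout description above, and has monodromy the join of the monodromies --- after which van Kampen and Mayer--Vietoris apply with ordinary local coefficients and the estimates are routine. An alternative closer in spirit to the preceding sections would bypass the pushouts: one verifies that the twisted $\Z/p^k\Z$-Euler class of $E_2$ is $e_1(k)_{\omega(k)}^{p-1}$, the $(p-1)$-fold cup product of the twisted Euler class of $E_1$, lying in $H^{(p-1)n}(B,(\Z/p^k\Z)_{\omega(k)^{p-1}})$, so that $e_1(k)_{\omega(k)}^{p-1}\cup-$ is a composite of $p-1$ maps each of which, by Corollary \ref{c1} applied to $E_1$, is an isomorphism in high degrees for every coefficient module; Corollary \ref{c1} applied in the reverse direction to $E_2$ then yields $cd_{\Z/p\Z}E_2<\infty$. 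This route trades the pushout bookkeeping for a verification that fiberwise join multiplies twisted Euler classes.
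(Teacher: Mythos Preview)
Your argument is correct. The orientability portion matches the paper exactly: both observe that the monodromy on the top $\Z/p\Z$-homology of the $(p-1)$-fold join is $\omega^{p-1}$, which is trivial since $(\Z/p\Z)^\times$ has order $p-1$.

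For the bound on $cd_{\Z/p\Z}E_2$, however, your primary argument is genuinely different from the paper's. The paper proceeds exactly along the lines of your closing ``alternative'': it identifies the $\Z/p\Z$-Euler class of $E_2$ as $e_\omega^{p-1}$, uses Corollary~\ref{c1} (direction $1\Rightarrow 2$) on $E_1$ to see that cupping with $e_\omega$ is an isomorphism in high degrees, composes $p-1$ such isomorphisms, and then --- because Corollary~\ref{c1} in the reverse direction requires the statement for all $\Z/p^k\Z$-modules --- runs an induction on $k$ to lift from $\Z/p\Z$ to $\Z/p^k\Z$ before concluding $cd_{\Z/p\Z}E_2<\infty$. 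Your Mayer--Vietoris route sidesteps this $k$-induction entirely: the homotopy-pushout description of the fiberwise join, together with the Serre bound $cd_{\Z/p\Z}(E_1^{(j-1)}\times_B E_1)\le cd_{\Z/p\Z}E_1^{(j-1)}+cd_{\Z/p\Z}F_1$, gives finiteness directly for $\Z/p\Z$-coefficients, which is all the definition of $cd_{\Z/p\Z}$ asks. This is arguably cleaner for the purpose at hand, while the paper's approach has the advantage of making the Euler class of $E_2$ explicit (namely $e_\omega^{p-1}$), information that is used elsewhere. Your caveat about the foundational bookkeeping for fiberwise joins is fair but not a gap: once one has a model with the homotopy-pushout description and $\pi_1$-isomorphisms on all legs (which follow since every fibre in sight is simply connected), Mayer--Vietoris with local coefficients applies as you say.
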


\begin{proof}
Let $e_\omega\in H^n(B,(\Z/p\Z)_\omega)$ be the twisted Euler class of the fibration $F_1\to E_1\to B$.
Because $cd_{\Z/p\Z}E_1<\infty$, we infer from Corollary \ref{c1} that
there exists $i_0$ such that
$$e_\omega\cup - : H^i(B, M)\to H^{i+n}(B,M_\omega)$$
is an isomorphism for all $i>i_0$ and all $\Z/p\Z[\pi_1(B)]$-modules $M$. We then perform a fiberwise $(p-1)$-fold join 
to obtain a new $\Z/p\Z$-spherical fibration
$F_2\to E_2\to B$ with Euler class $e=e_\omega^{p-1}$. This new
fibration is $H\Z/p\Z$-orientable, because the $(p-1)$-fold tensor product of $(\Z/p\Z)_\omega$ 
with diagonal action is the
trivial $\Z/p\Z[\pi_1(B)]$-module $\Z/p\Z$. Moreover
$$e\cup -: H^i(B,M)\to H^{i+(p-1)n}(B,M)$$
is an isomorphism for $i>i_0$ and all $\Z/p\Z[\pi_1(B)]$-modules $M$. 
Note that $e$ is the reduction mod $p$ of the twisted $\Z/p^k\Z$-Euler class
$e(k)_{\omega(k)}\in H^n(B,(\Z/p^k\Z)_{\omega(k)})$ of the $\Z/p\Z$-spherical fibration
$F_2\to E_2\to B$. Induction on $k$ then shows that
$$e(k)_{\omega(k)}\cup -: H^i(B,L)\to H^i(B,L_{\omega(k)})$$
is an isomorphism for all $\Z/p^k\Z[\pi_1(B)]$-modules $L$.
We infer from 
Corollary \ref{c1} that $cd_{\Z/p\Z}E_2<\infty.$
\end{proof}

\section{Partial Euler classes}

For a connected $CW$-complex $X$ we write $P_qX$ for its
$q$-th Postnikov section, with canonical map $X\to P_qX$ such that
\begin{itemize}\item[(1)]
$\pi_i(P_q(X))=0$ for $i>q\,,$
\item[(2)] $\pi_j(X)\stackrel{\cong}\longrightarrow\pi_j(P_qX)$ for $j\le q\,.$
\end{itemize}
\smallskip\noindent
In case that $X$ is a $\Z/p\Z$-homology sphere, we have $X_{\Z/p\Z}\simeq S^n_{\Z/p\Z}$.
Therefore, $P_q(X_{\Z/p\Z})=\{\ast$\} for $q<n$ and $P_n(X_{\Z/p\Z})\simeq K(\hat{\Z}_p,n)$. 
Adapting the terminology of \cite{AS} we define {\sl k-partial $\Z/p\Z$-Euler classes} as follows.
\begin{defi}\label{kpartial}
Let $B$ be a connected $CW$-complex and and $k\ge 0$. Then $\epsilon\in H^n(B,\Z/p\Z)$ is
a {\sl k-partial $\Z/p\Z$-Euler class} if there exists a fibration
$$(\Phi):\quad\quad P_{n-1+k}(S^{n-1}_{\Z/p\Z})\to E\to B$$
such that $\pi_1(B)$ acts trivially on $H^{n-1}(P_{n-1+k}(S^{n-1}_{\Z/p\Z}),\Z/p\Z)\cong \Z/p\Z$
and there is a generator of that group which transgresses to $\epsilon$
in the Serre spectral sequence with $\Z/p\Z$-coefficients for the fibration $(\Phi)$.
The $k$-partial $\Z/p\Z$-Euler class $\epsilon$ is called {\sl{orientable}},
if the fibration $(\Phi)$ can be chosen to be orientable in the sense of Definition \ref{orientable}.
\end{defi}
\begin{lemm}\label{powers}
Let $B$ be a connected $CW$-complex and $\epsilon\in H^n(B,\Z/p\Z)$ a $k$-partial $\Z/p\Z$-Euler class.
Then for all $\ell>0$, $\epsilon^\ell$ is a $k$-partial $\Z/p\Z$-Euler class. If $\epsilon$ is orientable
in the sense of Definition \ref{kpartial}, then so is
$\epsilon^\ell$.
\end{lemm}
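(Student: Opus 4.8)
The plan is to realize $\epsilon^{\ell}$ by the $\ell$-fold fiberwise join of the fibration witnessing $\epsilon$, after correcting its fiber by fiberwise $\Z/p\Z$-localization and by a Postnikov truncation. For $\ell=1$ there is nothing to prove, so fix $\ell\ge 2$ and choose, as in Definition \ref{kpartial}, a fibration $(\Phi)\colon Y\to E\to B$ with $Y=P_{n-1+k}(S^{n-1}_{\Z/p\Z})$ on which $\pi_1(B)$ acts trivially on $H^{n-1}(Y,\Z/p\Z)\cong\Z/p\Z$, together with a generator of that group transgressing to $\epsilon$. Let $(\Psi)\colon Y^{*\ell}\to E^{*_{B}\ell}\to B$ denote the $\ell$-fold fiberwise join of $(\Phi)$ with itself.

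First I would record the relevant homology of the fiber. The space $Y$ is $(n-2)$-connected and the localization map $S^{n-1}_{\Z/p\Z}\to Y$ is $(n+k)$-connected, so it induces an isomorphism $\tilde H_i(S^{n-1},\Z/p\Z)\cong\tilde H_i(Y,\Z/p\Z)$ for all $i\le n+k$. By the Künneth formula for iterated joins it then follows that $\tilde H_i(Y^{*\ell},\Z/p\Z)\cong\tilde H_i(S^{\ell n-1},\Z/p\Z)$ for all $i\le \ell n+k$, with the next nonvanishing reduced mod $p$ homology group of $Y^{*\ell}$ in degree $\ell n+k+1$; moreover $H^{\ell n-1}(Y^{*\ell},\Z/p\Z)\cong (H^{n-1}(Y,\Z/p\Z))^{\otimes\ell}\cong\Z/p\Z$ with trivial $\pi_1(B)$-action.

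Now I would apply fiberwise $\Z/p\Z$-localization (cf.\ the end of Section \ref{dim}) and then the fiberwise $(\ell n-1+k)$-th Postnikov section to $(\Psi)$, obtaining a fibration $(\Psi')\colon P_{\ell n-1+k}\bigl((Y^{*\ell})_{\Z/p\Z}\bigr)\to E'\to B$. The crux is to identify the new fiber: I claim $P_{\ell n-1+k}\bigl((Y^{*\ell})_{\Z/p\Z}\bigr)\simeq P_{\ell n-1+k}(S^{\ell n-1}_{\Z/p\Z})$. Consider the map $g\colon (S^{n-1}_{\Z/p\Z})^{*\ell}\to Y^{*\ell}$ obtained as the $\ell$-fold join of the localization map $S^{n-1}_{\Z/p\Z}\to Y$. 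Since $(S^{n-1}_{\Z/p\Z})^{*\ell}$ is $\Z/p\Z$-homology equivalent to $S^{\ell n-1}$, its $\Z/p\Z$-localization is $S^{\ell n-1}_{\Z/p\Z}$, and by the computation above $g$ is an isomorphism on $H_*(-,\Z/p\Z)$ in degrees $\le\ell n+k$; hence so is $g_{\Z/p\Z}\colon S^{\ell n-1}_{\Z/p\Z}\to (Y^{*\ell})_{\Z/p\Z}$. As both source and target of $g_{\Z/p\Z}$ are simply connected and $H\Z/p\Z$-local, the homotopy fibre of $g_{\Z/p\Z}$ is again simply connected and $H\Z/p\Z$-local with vanishing reduced mod $p$ homology through degree $\ell n+k-1$; since a simply connected $H\Z/p\Z$-local space whose reduced mod $p$ homology vanishes through degree $M$ is $M$-connected (mod $p$ Hurewicz, together with the fact that a $p$-divisible $H\Z/p\Z$-local abelian group is zero), that homotopy fibre is $(\ell n+k-1)$-connected. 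Therefore $g_{\Z/p\Z}$ induces an isomorphism on $\pi_i$ for $i\le \ell n-1+k$, and passing to $(\ell n-1+k)$-th Postnikov sections yields the desired homotopy equivalence (the $k$-invariants being forced to agree). I expect this identification of the fibre to be the main obstacle: the delicate point is controlling $\Z/p\Z$-localization of the non-finite-type space $Y^{*\ell}$ in the relevant range of degrees, while the rest is bookkeeping with the Serre spectral sequence.

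Finally, multiplicativity of the transgression under fiberwise join — the same computation as the passage to the Euler class $e_\omega^{p-1}$ in the proof of Lemma \ref{c2} — shows that a generator of $H^{\ell n-1}(Y^{*\ell},\Z/p\Z)$ transgresses to $\pm\epsilon^{\ell}$ in the Serre spectral sequence of $(\Psi)$. Fiberwise $\Z/p\Z$-localization induces an isomorphism of mod $p$ Serre spectral sequences over $B$ (it is a $\Z/p\Z$-homology equivalence on fibres), and passing to the fiberwise $(\ell n-1+k)$-th Postnikov section leaves the spectral sequence unchanged in fibre degrees $\le\ell n-1+k$; hence the corresponding generator of $H^{\ell n-1}\bigl(P_{\ell n-1+k}(S^{\ell n-1}_{\Z/p\Z}),\Z/p\Z\bigr)$ transgresses to $\pm\epsilon^{\ell}$ in $(\Psi')$. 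Rescaling the generator shows that $\epsilon^{\ell}$ is a $k$-partial $\Z/p\Z$-Euler class. For the orientability addendum, if $(\Phi)$ is chosen orientable then the homotopy action of $\pi_1(B)$ on $Y^{*\ell}$ is the $\ell$-fold join of the trivial action, hence trivial, and since fiberwise localization and truncation are functorial, $(\Psi')$ is orientable as well; thus $\epsilon^{\ell}$ is then an orientable $k$-partial $\Z/p\Z$-Euler class.
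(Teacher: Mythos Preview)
Your proof is correct and follows essentially the same route as the paper's: form the $\ell$-fold fiberwise join, apply fiberwise $\Z/p\Z$-localization, pass to the fiberwise $(\ell n-1+k)$-th Postnikov section, and identify the resulting fibre with $P_{\ell n-1+k}(S^{\ell n-1}_{\Z/p\Z})$; the transgression and orientability claims then follow by multiplicativity and functoriality. The paper asserts the fibre identification $P_{\ell n-1+k}((\ast^\ell P)_{\Z/p\Z})\simeq P_{\ell n-1+k}(S^{\ell n-1}_{\Z/p\Z})$ in one line, whereas you supply a detailed argument via the $(n+k)$-connected comparison map $(S^{n-1}_{\Z/p\Z})^{*\ell}\to Y^{*\ell}$ and a mod~$p$ Hurewicz/locality analysis of the homotopy fibre of its localization. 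One minor terminological slip: you call the canonical map $S^{n-1}_{\Z/p\Z}\to Y=P_{n-1+k}(S^{n-1}_{\Z/p\Z})$ the ``localization map,'' but it is the Postnikov truncation map; this does not affect the argument.
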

\begin{proof}
Let 
$P:= P_{n-1+k}(S^{n-1}_{\Z/p\Z})\to E\to B$
be a fibration such that $\pi_1(B)$ acts trivially on $H^{n-1}(P,\Z/p\Z)$ and let $\alpha\in
H^{n-1}(P,\Z/p\Z)$ be an element which transgresses to $\epsilon$. By forming fiberwise the $\ell$-fold join
and applying $\Z/p\Z$-localization,
we obtain a new fibration
$(\ast^\ell P)_{\Z/p\Z}\to E(\ell)\to B\,.$
In the Serre spectral sequence with $\Z/p\Z$-coefficients for this fibration, $(\alpha\ast\cdots\ast\alpha)_{\Z/p\Z}$
transgresses to $\epsilon^\ell$. Since $\ast^\ell S^{n-1}\simeq S^{n\ell-1}$,
$$P_{n\ell-1+k}((\ast^\ell P)_{\Z/p\Z})=P_{n\ell-1+k}(S^{n\ell-1}_{\Z/p\Z})$$
and we obtain by taking fiberwise Postnikov sections a fibration
$$P_{n\ell-1+k}(S^{n\ell-1}_{\Z/p\Z})\to E^f(\ell)\to B$$ for which the image of $(\ast^\ell\alpha)_{\Z/p\Z}$ under the natural map
$$ H^{n\ell-1}((\ast^\ell P)_{\Z/p\Z},\Z/p\Z)\stackrel{\cong}\longrightarrow H^{n\ell-1}(P_{n\ell-1+k}(S^{n\ell-1}_{\Z/p\Z}),\Z/p\Z)$$
transgresses to $\epsilon^\ell$. It is obvious that $\epsilon^\ell$ is orientable if $\epsilon$ is.
\end{proof}

\begin{lemm}\label{P10}
Let 
$(\Phi_0): S^n_{\Z/p\Z}\to E \to B$
\noindent
be a fibration with $B$ connected and $n>0$. By taking fiberwise Postnikov sections, we obtain fibrations
$$(\Phi_k):\quad P_{n+k}S^n_{\Z/p\Z} \to E_k \to B\,,\quad k\ge 0.$$
\noindent
The fibrations $(\Phi_k)$, $k\ge 0$, are all orientable 
if and only if
$\pi_1(B)$ acts trivially on $\pi_n(S^n_{\Z/p\Z})\cong \hat{\Z}_p$.
\end{lemm}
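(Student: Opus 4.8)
The plan is to reduce the lemma to the assertion that the given fibration $S^n_{\Z/p\Z}\to E\to B$ is orientable exactly when $\pi_1(B)$ acts trivially on $\pi_n(S^n_{\Z/p\Z})$; the statement about the fiberwise Postnikov sections $(\Phi_k)$ then follows formally. The fiberwise Postnikov construction is functorial over $B$ and supplies maps $E\to E_k$ over $B$ which on fibers are the canonical maps $S^n_{\Z/p\Z}\to P_{n+k}S^n_{\Z/p\Z}$; by naturality of the monodromy, the homotopy action of $(\Phi_k)$ is the composite of the homotopy action $\theta:\pi_1(B)\to[S^n_{\Z/p\Z},S^n_{\Z/p\Z}]$ of the given fibration with the map induced by $P_{n+k}$. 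Hence if $\theta$ is trivial then so is the homotopy action of every $(\Phi_k)$, i.e.\ all $(\Phi_k)$ are orientable. Conversely, if all $(\Phi_k)$ are orientable then $(\Phi_0)$, whose fiber is $P_nS^n_{\Z/p\Z}\simeq K(\hat{\Z}_p,n)$, is orientable; a self-homotopy-equivalence of $K(\hat{\Z}_p,n)$ is determined up to homotopy by its effect on $\pi_n$ (since $[K(\hat{\Z}_p,n),K(\hat{\Z}_p,n)]\cong H^n(K(\hat{\Z}_p,n),\hat{\Z}_p)\cong\mathrm{Hom}(\hat{\Z}_p,\hat{\Z}_p)$ by Hurewicz and universal coefficients), so triviality of the homotopy action of $(\Phi_0)$ forces $\pi_1(B)$ to act trivially on $\pi_n(K(\hat{\Z}_p,n))=\pi_n(S^n_{\Z/p\Z})\cong\hat{\Z}_p$. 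This settles the ``only if'' direction and reduces ``if'' to showing that a trivial action of $\pi_1(B)$ on $\pi_n(S^n_{\Z/p\Z})$ implies $\theta=0$. If $n=1$ there is nothing further to prove, since then $S^1_{\Z/p\Z}=K(\hat{\Z}_p,1)$ and $P_{1+k}S^1_{\Z/p\Z}=S^1_{\Z/p\Z}$ for all $k$; so assume $n\ge 2$.

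For the remaining implication I would prove that any map $\phi:S^n_{\Z/p\Z}\to S^n_{\Z/p\Z}$ inducing the identity on $\pi_n$ is homotopic to the identity; taking $\phi=\theta(g)$ for $g\in\pi_1(B)$ then gives $\theta=0$, whence the given fibration, and so every $(\Phi_k)$, is orientable. To show $\phi\simeq\mathrm{id}$, I would build a homotopy skeleton by skeleton along a CW structure on $S^n_{\Z/p\Z}$ having cells only in dimensions $0$ and $\ge n$, which exists because $S^n_{\Z/p\Z}$ is $(n-1)$-connected (in particular simply connected), as shown in the proof of Lemma~\ref{Z/pZ-sphere}. The successive obstructions lie in $H^i(S^n_{\Z/p\Z},\pi_i(S^n_{\Z/p\Z}))$, $i\ge n$. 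The primary obstruction lies in $H^n(S^n_{\Z/p\Z},\pi_n(S^n_{\Z/p\Z}))\cong\mathrm{Hom}(H_n(S^n_{\Z/p\Z},\Z),\hat{\Z}_p)=\mathrm{Hom}(\hat{\Z}_p,\hat{\Z}_p)$ — the Ext summand of universal coefficients vanishing because $H_{n-1}(S^n_{\Z/p\Z},\Z)=0$, and $H_n(S^n_{\Z/p\Z},\Z)\cong\pi_n(S^n_{\Z/p\Z})\cong\hat{\Z}_p$ by Hurewicz — and it equals $\phi_*-\mathrm{id}$ on $\pi_n$, hence is zero by hypothesis. For $i>n$ the coefficient group $\pi_i(S^n_{\Z/p\Z})$ is the ($p$-adic) completion of the finitely generated abelian group $\pi_iS^n$, hence a finitely generated $\hat{\Z}_p$-module, and the obstruction vanishes by the input below.

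That input — and the step I expect to be the crux — is the vanishing
$$H^i(S^n_{\Z/p\Z},M)=0\quad\text{for all }i>n\text{ and every finitely generated }\hat{\Z}_p\text{-module }M\qquad(n\ge2).$$
I would prove it by enlarging the coefficient module in stages. Since $S^n\to S^n_{\Z/p\Z}$ is an $H_*(-,\Z/p\Z)$-isomorphism it is an $H^*(-,\Z/p\Z)$-isomorphism (dualizing over the field $\Z/p\Z$), so $H^i(S^n_{\Z/p\Z},\Z/p\Z)\cong H^i(S^n,\Z/p\Z)=0$ for $i>n$. Induction on the order of $M$, via the long exact cohomology sequence of a short exact sequence of finite $p$-groups with $\Z/p\Z$ as one term, extends this to all finite $p$-groups $M$. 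For $M=\hat{\Z}_p=\varprojlim_j\Z/p^j\Z$, the Milnor exact sequence
$$0\to\varprojlim^1_j H^{i-1}(S^n_{\Z/p\Z},\Z/p^j\Z)\to H^i(S^n_{\Z/p\Z},\hat{\Z}_p)\to\varprojlim_j H^i(S^n_{\Z/p\Z},\Z/p^j\Z)\to 0$$
reduces the claim for $i>n$ to $\varprojlim_j H^i(S^n_{\Z/p\Z},\Z/p^j\Z)=0$ (immediate from the finite case) and $\varprojlim^1_j H^{i-1}(S^n_{\Z/p\Z},\Z/p^j\Z)=0$; the latter holds since the tower is zero for $i>n+1$, while for $i=n+1$ it is isomorphic to $\{\Z/p^j\Z\}_j$ with surjective transition maps (by universal coefficients, using $H_n=\hat{\Z}_p$ and $H_{n-1}=0$), hence Mittag-Leffler. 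A finitely generated $\hat{\Z}_p$-module is a finite direct sum of copies of $\hat{\Z}_p$ and of finite cyclic $p$-groups, so the general case follows. The one technical point is that $S^n_{\Z/p\Z}$ need not be of finite type; the Milnor sequence is still valid because it may be obtained at the cochain level, $C^*(S^n_{\Z/p\Z},\varprojlim_j A_j)=\varprojlim_j C^*(S^n_{\Z/p\Z},A_j)$ (cochain groups are products), each tower $\{C^i(S^n_{\Z/p\Z},\Z/p^j\Z)\}_j$ being a tower of surjections with vanishing $\varprojlim^1$.

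With this vanishing every obstruction in the preceding paragraph is zero, so $\phi\simeq\mathrm{id}$ whenever $\phi_*=\mathrm{id}$ on $\pi_n$; combined with the first paragraph this gives both directions of the lemma. The genuinely delicate points are the two just flagged: identifying the obstruction groups as cohomology of $S^n_{\Z/p\Z}$ with finitely generated $\hat{\Z}_p$-coefficients, and propagating the mod-$p$ cohomology vanishing to such coefficients over a space that need not be of finite type.
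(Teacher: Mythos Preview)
Your argument is correct, but it takes a substantially longer route than the paper's. The paper disposes of the lemma in one line: functoriality of $P_{n+k}$ plus the fact that homotopy classes of self-maps of $S^n_{\Z/p\Z}$ are in natural bijection with $\pi_n(S^n_{\Z/p\Z})$. That bijection is immediate from the defining property of Bousfield localization recalled in Section~\ref{dim}: since $S^n_{\Z/p\Z}$ is $H\Z/p\Z$-local and $c:S^n\to S^n_{\Z/p\Z}$ is an $H_*(-,\Z/p\Z)$-equivalence, precomposition with $c$ induces a bijection
\[
[S^n_{\Z/p\Z},\,S^n_{\Z/p\Z}]\ \xrightarrow{\ \cong\ }\ [S^n,\,S^n_{\Z/p\Z}]\ =\ \pi_n(S^n_{\Z/p\Z})\cong\hat{\Z}_p.
\]
Thus a self-map of $S^n_{\Z/p\Z}$ is homotopic to the identity precisely when it is the identity on $\pi_n$, and both directions of the lemma follow at once.

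You recover the same bijection the hard way, by obstruction theory together with a computation of $H^i(S^n_{\Z/p\Z},M)$ for finitely generated $\hat{\Z}_p$-modules $M$ via a Milnor $\varprojlim{}^{1}$ argument. This works, and your handling of the cochain-level inverse limit is careful, but it is considerably more labor than needed; the universal property of localization already encodes exactly the cohomological vanishing you are reproving. What your approach does buy is an explicit identification of the obstruction groups and a direct verification that $cd_{\Z/p\Z}S^n_{\Z/p\Z}=n$ for $n\ge 2$, which is of independent interest even if not required here.
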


\begin{proof} This follows from the functoriality of $P_{n+k}$
and the fact that homotopy classes $S^n_{\Z/p\Z}\to S^n_{\Z/p\Z}$
correspond naturally to elements of $\pi_n(S^n_{\Z/p\Z})$.
\end{proof}

\begin{defi}\label{omega-p-integral}
Let $X$ be a connected $CW$-complex with fundamental group $G$.
We call an element $x\in H^n(X,\Z/p\Z)$ $\omega$-p-integral, if there
exists an action $\omega: G\to {\hat{\Z}_p}^\times$ such that $G$ acts
trivially on $\hat{\Z}_p/p\hat{\Z}_p$ and $x$ lies
in the image of the natural coefficient homomorphism
$H^n(X,\hat{\Z}_p(\omega))$ $\to H^n(X,\Z/p\Z)\,.$ In case the action
$\omega$ can be chosen to be trivial, we call $x$ p-integral.
\end{defi}

To deal with non-orientable fibrations, we recall
the following fact. Let
$$\hspace{-2 cm} (F):\hspace{2 cm} K(M,m)\to E \to B$$ 
be a fibration with connected base $B$, $m>0$ and induced action of $\pi_1(B)=G$ on $M$ 
corresponding to the homomorphism
$\phi:G\to Aut(M)\,.$
Such fibrations are classified by cohomology elements with local coefficients as follows. There 
is a universal fibration
$$K(M,m+1)\to L_\phi(M,m+1)\to K(G,1)$$
such that fibrations of type $(F)$ 
correspond to homotopy classes of maps $f:B\to L_\phi (M,m+1)$ over $K(G,1)$.
The homotopy class over $K(G,1)$ of such an $f$ corresponds to an element in the cohomology with local coefficients $H^{m+1}(B,M)$,
see \cite{Baues} or \cite{G}.

The following lemma is a variation of Lemma 2.5 of \cite{AS}.
\begin{lemm}\label{beta}
Let  
$x\in H^{2n}(X,\Z/p\Z)$ be an $\omega$-p-integral element. Then some cup power of $x$
is a $k$-partial $\Z/p\Z$-Euler class and this $k$-partial $\Z/p\Z$-Euler class
is orientable (in the sense of Definition \ref{kpartial}) in case $x$ is p-integral.
\end{lemm}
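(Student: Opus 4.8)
The plan is to realize a suitable cup power $x^{\ell}$ of $x$ as the twisted Euler class of a fibration whose fibre is $K(\hat{\Z}_p(\omega^{\ell}),m-1)=P_{m-1}(S^{m-1}_{\Z/p\Z})$, and then to build on the missing homotopy of $S^{m-1}_{\Z/p\Z}$ one Postnikov stage at a time over $X$; the cup power is exactly what makes the obstructions to these extensions vanish. Fix $k\ge 0$. By Definition \ref{omega-p-integral} choose $\tilde x\in H^{2n}(X,\hat{\Z}_p(\omega))$ reducing to $x$, with $G$ acting trivially on $\hat{\Z}_p/p\hat{\Z}_p$ (and with $\omega$ trivial when $x$ is $p$-integral). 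Set $\ell=p^{r}$ and $m=2n\ell$, so that $\tilde x^{\,\ell}\in H^{m}(X,\hat{\Z}_p(\omega^{\ell}))$ reduces to $x^{\ell}\in H^{m}(X,\Z/p\Z)$ with \emph{untwisted} coefficients. By the classification of fibrations with Eilenberg--MacLane fibre recalled just before the lemma, applied with fibre $K(\hat{\Z}_p(\omega^{\ell}),m-1)$ classified by $H^{m}(X,\hat{\Z}_p(\omega^{\ell}))$, the class $\tilde x^{\,\ell}$ is the twisted Euler class of a fibration $E_{0}\to X$ with that fibre, in which a generator $u$ of $H^{m-1}(K(\hat{\Z}_p,m-1),\Z/p\Z)\cong\Z/p\Z$ transgresses to $x^{\ell}$; here $\pi_{1}(E_{0})=G$ since $m\ge 4$ makes the fibre simply connected, and $G$ acts trivially on this $\Z/p\Z$ since it acts trivially on $\hat{\Z}_p/p\hat{\Z}_p$. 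So $x^{\ell}$ is already a $0$-partial $\Z/p\Z$-Euler class, orientable when $\omega$ is trivial (Lemma \ref{P10}); if $k=0$ we are done.

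Assume $k\ge 1$ and take $r$ with $p^{r}>k+1$; this also puts the first $k$ Postnikov stages of $S^{m-1}$, with $m-1=2np^{r}-1$, in the stable range. I would inductively build fibrations $E_{j}\to X$ for $0\le j\le k$, with fibre $P_{m-1+j}(S^{m-1}_{\Z/p\Z})$ and with maps $E_{j}\to E_{j-1}$ over $X$ inducing the Postnikov projection on fibres, so that $u$ keeps transgressing to $x^{\ell}$. Since $m-1$ is odd, $\pi_{j}:=\pi_{m-1+j}(S^{m-1}_{\Z/p\Z})$ is a finite $p$-group for $j\ge 1$, so the passage $E_{j-1}\rightsquigarrow E_{j}$ is a fibrewise principal $K(\pi_{j},m-1+j)$-fibration; by the classification recalled above it exists, with the transgression property preserved, as soon as the $j$-th $k$-invariant $\kappa_{j}$ of $S^{m-1}_{\Z/p\Z}$ --- a class on the fibre $P_{m-2+j}(S^{m-1}_{\Z/p\Z})$ with coefficients in $\pi_{j}$ --- extends compatibly to $E_{j-1}$. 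Because $m$ is large the first $k$ $k$-invariants are stable, so $\kappa_{j}=\Theta_{j}(u)$ for a stable cohomology operation $\Theta_{j}$ of positive degree $j+1$ (of higher order once $j\ge 2$, defined where the earlier $k$-invariants vanish); by the Kudo transgression theorem $\kappa_{j}$ is transgressive in the Serre spectral sequence of $E_{j-1}\to X$ with transgression $\Theta_{j}(x^{\ell})\in H^{m+j+1}(X,\pi_{j})$, and this transgression is the only obstruction to the extension.

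It remains to see that $\Theta_{j}(x^{\ell})=0$ for $1\le j\le k$. The Cartan formula gives, for any class $z$, that $P^{a}(z^{p})=(P^{a/p}z)^{p}$ if $p\mid a$ and $0$ otherwise, and $\beta(z^{p})=0$; iterating $r$ times and reducing admissible monomials from their rightmost factor inward, every element of positive degree $<p^{r}$ of the (mod $p$, or mod $p^{s}$) Steenrod algebra annihilates every $p^{r}$-th power $z^{p^{r}}$, and the same propagates through the higher operations via the secondary Cartan relations (using that $z^{p^{r}}$ is $r$-fold Frobenius-divisible in the relevant range). Since $\deg\Theta_{j}=j+1\le k+1<p^{r}$, all the $\Theta_{j}(x^{\ell})$ vanish, the induction runs up to $E_{k}\to X$, and $x^{\ell}=x^{p^{r}}$ is the transgression of $u$ for the fibration $P_{m-1+k}(S^{m-1}_{\Z/p\Z})\to E_{k}\to X$ with $G$ acting trivially on $H^{m-1}$ of the fibre; that is, $x^{p^{r}}$ is a $k$-partial $\Z/p\Z$-Euler class. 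When $x$ is $p$-integral, $\omega=1$, so every step is carried out with $G$ acting trivially on the homotopy of $S^{m-1}_{\Z/p\Z}$, and the resulting fibration $(\Phi)$ of Definition \ref{kpartial} is orientable by Lemma \ref{P10}.

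I expect the last two paragraphs to be the main obstacle: first, identifying the obstruction to the fibrewise Postnikov extension precisely as $\Theta_{j}$ applied to $x^{\ell}$, which requires care with the Serre spectral sequence over the non-Eilenberg--MacLane fibre $P_{m-2+j}(S^{m-1}_{\Z/p\Z})$, with finite-$p$-group coefficients, and with the higher-order form of the Kudo transgression theorem; and second, the vanishing of positive-degree stable operations --- primary, secondary, and of all higher orders --- on high $p$-th powers, which past the primary Cartan computation rests on the corresponding secondary Cartan relations.
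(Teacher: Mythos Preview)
Your strategy differs from the paper's in a structural way. You fix the exponent $\ell=p^{r}$ once (with $p^{r}>k+1$) and attempt to kill all $k$ obstructions simultaneously by identifying each one as a stable (possibly higher-order) operation $\Theta_{j}$ evaluated on $x^{\ell}$, then invoking a Cartan-type vanishing on high $p$-th powers. The paper instead argues inductively on $k$: given that $x^{m}$ is $(k-1)$-partial, Lemma~\ref{powers} makes every $(x^{m})^{j}$ also $(k-1)$-partial, and one then shows---following \cite{AS}*{Lemma 2.5}---that for $j$ a large enough $p$-power the $k$-invariant $[\theta]$ is a \emph{permanent cycle} in the Serre spectral sequence of $Q(k-1)\to F(k-1)\to X$. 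The exponent is allowed to grow with $k$, and the obstructions are never identified as named operations on $x$; the argument exploits the effect of the fiberwise $p^{j}$-fold join on the spectral-sequence filtration directly.

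Your route has two genuine gaps, both of which you flag yourself. First, the assertion that ``this transgression is the only obstruction'' is not justified: the fibre $P_{m-2+j}(S^{m-1}_{\Z/p\Z})$ has nontrivial $\pi_{j}$-cohomology in every degree $\ge m-1$, so $\kappa_{j}\in E_{2}^{0,m+j}$ supports differentials $d_{2},\ldots,d_{j+2}$ in addition to the transgression $d_{m+j+1}$. Kudo's theorem guarantees transgressivity (hence vanishing of those intermediate differentials) only for \emph{primary} Steenrod operations applied to a transgressive class; for $j\ge 2$ the $k$-invariant is a higher-order operation and no such statement is available off the shelf. Second, even granting transgressivity, the vanishing $\Theta_{j}(x^{p^{r}})=0$ for higher-order $\Theta_{j}$ is not something one can quote: the primary Cartan computation you give is fine, but the extension to secondary and higher operations via ``secondary Cartan relations'' is delicate (indeterminacies, defining systems) and is not a standard result. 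The paper's approach avoids both issues precisely because it never names the operation: the Adem--Smith permanent-cycle argument works uniformly in the spectral sequence and handles all the intermediate differentials at once through the join, at the cost of not controlling the final exponent explicitly.
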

 
\begin{proof} Let $G$ be the fundamental group of $X$.
Since $x$ is $\omega$-$p$-integral, 
there exists $\omega: G\to\hat{\Z}_p^\times$ and $\tilde{x}\in H^{2n}(X,\hat{\Z}_p(\omega))$
mapping to $x$ under restriction mod $p$.
Let $\mu: X\to L_\omega(\hat{\Z}_p,2n)$ correspond to $\tilde{x}$. It
classifies a fibration
$$K(\hat{\Z}_p(\omega),2n-1)\to E\to X$$
with $H^{2n-1}(K(\hat{\Z}_p(\omega),2n-1),\Z/p\Z)$ =  $H^{2n-1}(K(\hat{\Z}_p/p\hat{\Z}_p,2n-1),\Z/p\Z)$ $\cong$  $\Z/p\Z$
having trivial $G$-action. This shows that $x$ is a $0$-partial $\Z/p\Z$-Euler class.
Suppose now that $k>0$ is given and that $x^m$ is a $(k-1)$-partial
$\Z/p\Z$-Euler class. Thus there is a fibration 
$$P_{2nm-1+k-1}(S^{2nm-1}_{\Z/p\Z})=:P(k-1)\to E(k-1)\to X$$
with a generator of $H^{2nm-1}(P(k-1),\Z/p\Z)^G=\Z/p\Z$ transgressing to $y:=x^m$. By Lemma \ref{powers},
for all $j$, $y^j$ is a $(k-1)$-partial $\Z/p\Z$-Euler class too. Thus there are
fibrations
$$P_{2nmj-1+k-1}(S^{2nmj-1}_{\Z/p\Z})=:Q(k-1)\to F(k-1)\to X$$
with a generator of $H^{2nmj-1}(Q(k-1),\Z/p\Z)^G=\Z/p\Z$ transgressing to $y^j=x^{mj}$. To show that 
for a suitable $j$,
the power 
$y^j$ gives rise to
a $k$-partial $\Z/p\Z$-Euler class, we need to check that the classifying map $\theta:Q(k-1)\to K(\pi,2nmj+k)$ for the fibration $Q(k)\to Q(k-1)$ factors through $F(k-1)$. Note that
$$\pi:=\pi_{2nmj+k-1}(Q(k))=\pi_{2nmj+k-1}(S^{2nmj-1}_{\Z/p\Z})$$
is a finite $p$-group on which $\pi_1(X)=G$ acts
via
$$\omega^{mj}:G\to\hat{\Z}_p^\times=HoAut(S^{2nmj-1}_{\Z/p\Z})\,.$$
We write $\underline{\pi}$ for $\pi$
with that action. 
Because of the naturality of the Postnikov section functor, the homotopy
fibration
$$Q(k)\to Q(k-1)\stackrel{\theta}\longrightarrow K(\underline{\pi},2nmj+k)$$
is compatible with the homotopy $G$-action via $\omega^{mj}$ on these spaces.
Therefore,
$$[\theta]\in H^{2nmj+k}(Q(k-1),\underline{\pi})$$
is $G$-invariant with respect to the diagonal $G$-action on this cohomology group.
In the Serre spectral
sequence for $Q(k-1)\to F(k-1)\to X$ with $\underline{\pi}$ coefficients
$$H^s(X, H^t(Q(k-1),\underline{\pi}))\Rightarrow H^{s+t}(F(k-1),\underline{\pi})$$
the cohomology class $[\theta]$ lies thus in 
$$E_2^{0,2nmj+k}=H^{2nmj+k}(Q(k-1),\underline{\pi})^G$$
and to show that it is the restriction of a class in the cohomology of $F(k-1)$
with $\underline{\pi}$-coefficients amounts to show that $[\theta]$ is a permanent cycle. 
The same argument as in \cite{AS}*{Lemma 2.5}
shows that this is the case for $j$ a large enough $p$-power.
It follows that some power of $x$ is $k$-partial $\Z/p\Z$-Euler class.
In case $x$ is $p$-integral, the argument shows that the $k$-partial
$\Z/p\Z$-Euler class we obtained is orientable.   \end{proof}   
                                                                                                                                                                                                                                      
\section{Proof of Theorems \ref{main} and \ref{main2}}\label{mainsection}
We will give the proof of Theorem \ref{main}. The proof of Theorem \ref{main2}
is analogous but simpler.

Suppose that $G$ has twisted $p$-periodic cohomology. Then there exists
an $\omega$-$p$-integral class 
$\epsilon\in H^{2n}(G,\Z/p\Z)$ and $\epsilon_\omega
\in H^{2n}(G,\hat{\Z}_p(\omega))$ whose reduction mod $p$ is $\epsilon$. By assumption,
there is and a $\ell_0>0$, such that
cup product with $\epsilon_\omega$ induces isomorphisms $H^i(G,M)\to H^{i+2n}(G,M_\omega)$ for all $i\ge \ell_0$
and all $p$-torsion $\Z G$-modules $M$ of finite exponent.
By Lemma \ref{beta} we can find a cup-power $\epsilon^m$ which
is an $\ell_o$-partial $\Z/p\Z$-Euler class.
Therefore, we
have a fibration
$$F(\ell_0):\quad P_{2nm-1+\ell_0}(S^{2nm-1}_{\Z/p\Z})\to E(\ell_0)\to K(G,1)\;,$$
with the property that a generator of $H^{2nm-1}(P_{2nm-1+\ell_0}(S^{2nm-1}_{\Z/p\Z}),\Z/p\Z)$
$\cong \Z/p\Z$
transgresses to $\epsilon^m$ in the Serre spectral sequence for $F(\ell_0)$. 
We want to show inductively that $\epsilon^m$ is a $k$-partial Euler class for all $k\ge\ell_0$.
Write $P(j)$ for $P_{2nm-1+j}(S^{2nm-1}_{\Z/p\Z})$.
We will inductively construct fibrations 
$$F(k):\quad P(k)\to E(k)\to K(G,1)$$
for $k>\ell_0$
with the property that a generator of $H^{2nm-1}(P(k),\Z/p\Z)$ transgresses to $\epsilon^m$.
To pass from $F(k-1)$ to $F(k)$ we
argue as follows. 
We have a diagram 
$${\xymatrix{
F(k-1): &P(k-1)\ar[r]&E(k-1)\ar[r]&K(G,1)\\
F(k): &P(k)\ar[u]\ar@{.>}[r]&E(k)\ar@{.>}[r]\ar@{.>}[u]&K(G,1)\ar[u]^{=}\\
}}
$$
in which the fibration $P(k)\to P(k-1)$ has fiber $K(\pi(\omega),2nm-1+k)$ and
is classified by a map
$$\theta: P(k-1)\to K(\pi(\omega),2nm+k),$$
where $\pi(\omega)$ stands for the finite $p$-group $\pi:=\pi_{2nm-1+k}(S^{2nm-1})\otimes \hat{\Z}_p$
with $G$-action induced by $\omega^m: G\to\hat{\Z}_p^\times\subset Aut(\pi_{2nm-1}(S^{2nm-1}_{\Z/p\Z}))$.
To construct the fibration $F(k)$ and the dotted arrows depicted above, we need to
show that $\theta$ factors through $E(k-1)$. This amounts to showing that
$[\theta]\in H^{2nm+k}(P(k-1),\pi(\omega))$ lies in the image of the restriction map
$$H^{2nm+k}(E(k-1),\pi(\omega))\to H^{2nm+k}(P(k-1),\pi(\omega))\,.$$
As argued in the proof of Lemma \ref{beta}, $[\theta]\in H^{2nm+k}(P(k-1),\pi(\omega))$
is $G$-invariant with respect to the diagonal $G$-action  via $\omega^m$ on this cohomology group.
The restriction map in question
corresponds to an edge homomorphism
in the Serre spectral sequence with $\pi(\omega)$-coefficients for the fibration
$P(k-1)\to E(k-1)\to K(G,1)$,
$$H^{2nm+k}\!(E(k-1),\!\pi(\omega))\!\twoheadrightarrow\! E^{0,2nm+k}_\infty\!\subset\! E^{0,2nm+k}_2\!=\!
H^{2nm+k}\!(P(k-1),\!\pi(\omega))^G\!.$$
We need therefore to check that $[\theta]$ is a permanent cycle in the Serre spectral sequence. The only differentials
on $[\theta]$ which could be non-zero are, for dimension reasons, the differentials 
$$d_{k+2}:E^{0,2nm+k}_2=E^{0,2nm+k}_{k+2}\to E^{k+2,2nm-1}_{k+2}$$
which takes values in
$$\ker(\epsilon_\omega^m\cup-: H^{k+2}(G,\pi(\omega)_{\bar{\omega}^m})\to H^{k+2+2nm}(G,\pi(\omega)))\,,$$
respectively 
$$d_{2nm+k+1}: E^{0,2nm+k}_{2nm+k+1}\to E^{2nm+k+1,0}_{2nm+k+1}\,,$$ which takes values in
$$\operatorname{coker}(\epsilon^m_\omega\cup-: H^{k+1}(G,\pi(\omega))\to H^{2nm+k+1}(G,\pi(\omega)_{\omega^m}))\,.$$
Because $k>\ell_0$, we know that
$$\epsilon^m_\omega\cup -: H^{s}(G,M)\to H^{s+2nm}(G,M_{\omega^m})$$
is an isomorphism for $s=k+1$, respectively $s=k+2$, and any $p$-torsion module $M$ of bounded exponent.
The differentials $d_{k+2}$, respectively
$d_{2nm+k+1}$ depicted above are therefore equal to 0. We conclude that the fibrations in the diagram above can be constructed as displayed.
Passing to homotopy limits in the towers $\{F(k)\}_{k\ge 0}$ of that diagram, one obtains a fibration
$$F(\infty):\quad \quad S^{2n-1}_{\Z/p\Z}\to E\to K(G,1)\,,$$ 
as desired. To check that $cd_{\Z/p\Z}(E)<\infty$, one considers the
Serre spectral sequence of the fibration $F(\infty)$ with coefficients in a $\Z/p\Z[G]$-module $L$ and finds that
$H^{j}(E,L)=0$ for $j$ large enough, independent of $L$, finishing the first part of the proof.          

Suppose now conversely that $X$ is a simply connected free $G$-$CW$-complex which is
a $\Z/p\Z$-homology sphere satisfying $cd_{\Z/p\Z}X/G<\infty$.
By Lemma \ref{spacetofibration} there exists a $\Z/p\Z$-spherical fibration
$F\to E\to K(G,1)$ with $F$ simply connected and $cd_{\Z/p\Z}E<\infty$.
Corollary \ref{fibration to twisted} then implies that $G$ has twisted
$p$-periodic cohomology,
completing the proof of Theorem \ref{main}.\hfill$\square$

\section{Algebraic characterization}
Let $x\in H^n(G,\Z/p\Z)$ and consider a $\Z/p\Z [G]$-projective resolution
$$\mathcal{P}_\ast:\quad \cdots\longrightarrow P_n\stackrel{\partial_n}\longrightarrow P_{n-1}
\stackrel{\partial_{n-1}}\longrightarrow\cdots\longrightarrow P_0\longrightarrow\Z/p\Z\,.$$
Put $K_i:=\partial_i{P_i}$. A cocycle representative of $x$ corresponds to a map
$\theta: K_n\to\Z/p\Z$. Form the following diagram

$${\xymatrix{
 &K_n\ar[r]\ar[d]^\theta&P_{n-1}\ar[r]\ar[d]&P_{n-2}\ar[r]\ar[d]^=&\ar@{.}&\ar[r]&P_0\ar[r]\ar[d]^=&\mathbb{Z}/p\mathbb{Z}
 \ar[d]^=\\
 \tilde{x}:&\Z/p\Z\ar[r]&A\ar[r]&P_{n-2}\ar[r]&\ar@{.}[r]&\ar[r]&P_0\ar[r]&\Z/p\Z
}}
$$
where the square on the left is a push-out square. Then the class of the $n$-fold extension
$\tilde{x}$, $[\tilde{x}]\in Ext^n_{\Z/p\Z G}(\Z/p\Z,\Z/p\Z)$, corresponds
to $x\in H^n(G,\Z/p\Z)$.

\begin{lemm}{\label{proj.dim}} Let $e\in H^n(G,\Z/p\Z)$ and consider the $n$-extension
$$\tilde{e}:\quad \Z/p\Z\to A\to P_{n-2}\to\cdots\to P_{0}\to \Z/p\Z$$
as above. Then the following are equivalent.
\begin{itemize}
\item[1)] $G$ has $\Z/p\Z$-periodic cohomology via cup-product with $e$;
\item[2)] the $\Z/p\Z[G]$-projective dimension of $A$ is finite.
\end{itemize}
\end{lemm}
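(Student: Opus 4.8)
The plan is to reinterpret cup product by $e$ as a connecting homomorphism and then to read off the $\Z/p\Z[G]$-projective dimension of $A$ from a long exact sequence; the two implications are then near-symmetric diagram chases.

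First I would pass to $\operatorname{Ext}$. For every $\Z/p\Z[G]$-module $M$ one has $H^i(G,M)\cong\operatorname{Ext}^i_{\Z/p\Z G}(\Z/p\Z,M)$, computed from $\mathcal P_\ast$, and under this identification cup product with $e$ is Yoneda composition with $e=[\tilde e]\in\operatorname{Ext}^n_{\Z/p\Z G}(\Z/p\Z,\Z/p\Z)$. From the exact sequence $0\to K_n\to P_{n-1}\to K_{n-1}\to 0$ and the push-out square defining $A$ one gets the short exact sequence
$$(\ast)\colon\qquad 0\longrightarrow\Z/p\Z\longrightarrow A\longrightarrow K_{n-1}\longrightarrow 0$$
(with $K_0=\Z/p\Z$ when $n=1$), and $\tilde e$ is the Yoneda splice of $(\ast)$ with the truncated resolution $\eta\colon 0\to K_{n-1}\to P_{n-2}\to\cdots\to P_0\to\Z/p\Z\to 0$. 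Since the $P_j$ are projective, Yoneda composition with $\eta$ is a natural dimension-shift isomorphism $\operatorname{Ext}^{j}_{\Z/p\Z G}(K_{n-1},M)\xrightarrow{\ \cong\ }\operatorname{Ext}^{j+n-1}_{\Z/p\Z G}(\Z/p\Z,M)$ for all $j\ge 1$. Consequently, for fixed $i$ and $M$, cup product with $e$ is an isomorphism $H^i(G,M)\to H^{i+n}(G,M)$ if and only if the connecting homomorphism
$$\delta_M\colon\operatorname{Ext}^i_{\Z/p\Z G}(\Z/p\Z,M)\longrightarrow\operatorname{Ext}^{i+1}_{\Z/p\Z G}(K_{n-1},M)$$
of $(\ast)$ is an isomorphism. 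Hence condition $1)$ is equivalent to: there is $k>0$ such that $\delta_M$ is an isomorphism for all $i\ge k$ and all $\Z/p\Z[G]$-modules $M$.

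Next I would chase the long exact $\operatorname{Ext}^\bullet_{\Z/p\Z G}(-,M)$-sequence of $(\ast)$,
$$\cdots\to\operatorname{Ext}^i(A,M)\to\operatorname{Ext}^i(\Z/p\Z,M)\xrightarrow{\ \delta_M\ }\operatorname{Ext}^{i+1}(K_{n-1},M)\to\operatorname{Ext}^{i+1}(A,M)\to\cdots$$
(all $\operatorname{Ext}$-groups over $\Z/p\Z[G]$). If $\operatorname{pd}_{\Z/p\Z G}A=d<\infty$, then $\operatorname{Ext}^i(A,M)=0$ for all $i>d$ and all $M$, so $\delta_M$ is an isomorphism for all $i>d$ and $1)$ holds with $k=d+1$; this proves $2)\Rightarrow 1)$. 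Conversely, if $\delta_M$ is an isomorphism for all $i\ge k$ and all $M$, then for each $i\ge k+1$ the map $\operatorname{Ext}^i(K_{n-1},M)\to\operatorname{Ext}^i(A,M)$ vanishes (the preceding $\delta_M$, in degree $i-1\ge k$, is surjective) and $\operatorname{Ext}^i(A,M)\to\operatorname{Ext}^i(\Z/p\Z,M)$ vanishes (the next $\delta_M$, in degree $i\ge k$, is injective), so $\operatorname{Ext}^i_{\Z/p\Z G}(A,M)=0$ for all $i\ge k+1$ and all $M$. In particular $\operatorname{Ext}^{k+1}_{\Z/p\Z G}(A,-)\equiv 0$, so the $k$-th syzygy of $A$ in any $\Z/p\Z[G]$-projective resolution has $\operatorname{Ext}^1_{\Z/p\Z G}(-,-)$ identically zero, hence is projective, hence $\operatorname{pd}_{\Z/p\Z G}A\le k<\infty$; this proves $1)\Rightarrow 2)$.

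The step I expect to require the most care is the first one: verifying that cup product with $e$ equals the connecting homomorphism $\delta_M$ of $(\ast)$ composed with the dimension shift coming from $\eta$. Both underlying facts — that cup product in $H^\ast(G,-)$ with coefficients in the base ring $\Z/p\Z$ coincides with the Yoneda product on $\operatorname{Ext}_{\Z/p\Z G}(\Z/p\Z,-)$, and that the action of an $n$-fold extension is the composite of the connecting maps of its constituent short exact sequences — are standard, but one must match them against the specific push-out square defining $\tilde e$, i.e.\ check that $\theta$ represents $e$ under $\operatorname{Ext}^n_{\Z/p\Z G}(\Z/p\Z,\Z/p\Z)\cong\operatorname{Ext}^1_{\Z/p\Z G}(K_{n-1},\Z/p\Z)$ and that the corresponding $1$-extension is exactly $(\ast)$. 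After that the argument is the routine long-exact-sequence bookkeeping above, the only subtlety being the harmless off-by-one shift in the threshold degree (periodicity is only claimed in high degrees). Finally, the degenerate case $n=1$ is covered by reading $K_0=\Z/p\Z$, with $\eta$ the identity extension and no dimension shift.
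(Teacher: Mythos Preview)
Your proof is correct and is essentially the same argument as the paper's. The paper phrases the identification of cup product with $e$ via a degree $-n$ chain map $\Theta$ extending $\theta\colon K_n\to\Z/p\Z$ and then compares the two long exact $\operatorname{Ext}$-sequences arising from the rows of the push-out square (top row $0\to K_n\to P_{n-1}\to K_{n-1}\to 0$, bottom row your $(\ast)$), whereas you go straight to the Yoneda splice $\tilde e=(\ast)\circ\eta$ and use the dimension-shift isomorphism from $\eta$; but since $\operatorname{Ext}^i(P_{n-1},-)=0$ for $i>0$, the paper's ladder collapses to exactly your statement that $e\cup-$ is an isomorphism iff $\delta_M$ is, and both finish with the same long exact sequence of $(\ast)$.
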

\begin{proof}
Let $\mathcal{P}_\ast$ be a $\Z/p\Z[G]$-projective resolution of $\Z/p\Z$ and choose
$\theta: K_n\to \Z/p\Z$ to represent $e$ as above, giving rise to the $n$-extension $\tilde{e}$.
It is known that cup product with $e$ is induced by a chain map $\Theta:\mathcal{P}_\ast\to
\mathcal{P}_\ast$ of degree $-n$ which extends $\theta$. Consider the following commutative diagram
with exact rows

$${\xymatrix{
 0\ar[r]&K_n\ar[r]\ar[d]^\theta&P_{n-1}\ar[r]\ar[d]^\mu&K_{n-1}\ar[r]\ar[d]^=&0\\
 0\ar[r]&\Z/p\Z\ar[r]&A\ar[r]&K_{n-1}\ar[r]&0\,.
 }}
$$
From the corresponding commutative diagram of long exact $Ext$-sequen\-ces
$${\xymatrix@C-12pt{
\ar[r]& Ext^i_{\Z/p\Z[G]}(K_{n-1},-)\ar[r]& Ext^i_{\Z/p\Z[G]}(P_{n-1},-)\ar[r]&
 Ext^i_{\Z/p\Z[G]}(K_n,-)\ar[r]&\\
 \ar[r]& Ext^i_{\Z/p\Z[G]}(K_{n-1},-)\ar[r]\ar[u]^=&Ext^i_{\Z/p\Z[G]}(A,-)
 \ar[r]\ar[u]&Ext^i_{\Z/p\Z[G]}(\Z/p\Z,-)\ar[r]\ar[u]^{Ext^i_{\Z/p\Z[G]}(\theta,-)}&
}}
$$
follows that $Ext^i_{\Z/p\Z[G]}(\theta,-)$ is an isomorphism for large $i$ if and only
if $proj.dim_{\Z/p\Z[G]}A <\infty$.
\end{proof}

\begin{coro}\label{zeroproj}
Let $G$ be a group with $\Z/p\Z$-periodic cohomology. There exist $k>0$ such
that for all $i\ge k$ and all projective $\Z/p\Z[G]$-modules $P$,
$H^i(G,P)=0$.
\end{coro}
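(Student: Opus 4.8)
The plan is to deduce Corollary~\ref{zeroproj} from Lemma~\ref{proj.dim} together with the definition of $\Z/p\Z$-periodic cohomology. Since $G$ has $\Z/p\Z$-periodic cohomology, there is a class $e\in H^n(G,\Z/p\Z)$ and an integer $k_0>0$ such that $e\cup-:H^i(G,M)\to H^{i+n}(G,M)$ is an isomorphism for all $i\ge k_0$ and all $\Z/p\Z[G]$-modules $M$. Form the $n$-extension $\tilde e:\ \Z/p\Z\to A\to P_{n-2}\to\cdots\to P_0\to\Z/p\Z$ attached to $e$ as in the paragraph preceding Lemma~\ref{proj.dim}. By the implication $1)\Rightarrow 2)$ of that lemma, $d:=\mathrm{proj.dim}_{\Z/p\Z[G]}A<\infty$.

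Next I would read off a bound on the projective dimension of $\Z/p\Z$ over $\Z/p\Z[G]$ modulo a shift. The extension $\tilde e$ exhibits $\Z/p\Z$ as having a partial projective resolution $P_0,\dots,P_{n-2}$ followed by $A$; hence there is a short exact sequence relating the $(n-1)$-st syzygy $K_{n-1}$ of $\Z/p\Z$ to $A$, namely $0\to\Z/p\Z\to A\to K_{n-1}\to 0$ (this is exactly the bottom row used in the proof of Lemma~\ref{proj.dim}). Since $\mathrm{proj.dim}\,A=d<\infty$ and $\mathrm{proj.dim}\,\Z/p\Z$ could a priori be infinite, I instead argue directly with $Ext$: for any projective $\Z/p\Z[G]$-module $P$ and any $i$, there is an isomorphism $H^i(G,P)\cong Ext^i_{\Z/p\Z[G]}(\Z/p\Z,P)$, and the chain map $\Theta$ inducing cup product with $e$ gives, for $i\ge k_0$, an isomorphism $Ext^i_{\Z/p\Z[G]}(\Z/p\Z,P)\xrightarrow{\ \cong\ }Ext^{i+n}_{\Z/p\Z[G]}(\Z/p\Z,P)$. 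Thus $H^i(G,P)\cong H^{i+n}(G,P)\cong H^{i+2n}(G,P)\cong\cdots$ for all $i\ge k_0$.

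It therefore suffices to show that $H^i(G,P)=0$ for a single value of $i$ in each residue class mod $n$ with $i\ge k_0$; periodicity then propagates the vanishing to all $i\ge k_0$, and one takes $k=k_0$. To produce such a vanishing, I would use finiteness of $\mathrm{proj.dim}_{\Z/p\Z[G]}A=d$: from the short exact sequence $0\to\Z/p\Z\to A\to K_{n-1}\to 0$ one gets, for $i>\max(d,0)$, that $Ext^i_{\Z/p\Z[G]}(A,P)=0$, and the long exact $Ext$-sequence then identifies $Ext^i_{\Z/p\Z[G]}(\Z/p\Z,P)$ with $Ext^{i+1}_{\Z/p\Z[G]}(K_{n-1},P)\cong Ext^{i+n}_{\Z/p\Z[G]}(\Z/p\Z,P)$ — recovering periodicity — but more importantly, chasing far enough out shows that all these groups are computed by a complex of $\mathrm{Hom}(P_j,P)$-terms that eventually stabilises; since $P$ is projective and the relevant syzygy $A$ has a \emph{finite} projective resolution, the groups $Ext^i_{\Z/p\Z[G]}(A,P)$ vanish for $i>d$, and combined with periodicity of $Ext^*(\Z/p\Z,P)$ above degree $k_0$ this forces $H^i(G,P)=0$ for all $i\ge k:=\max(k_0,d+1)$.

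The main obstacle is the bookkeeping: one must be careful that the periodicity isomorphism and the vanishing coming from $\mathrm{proj.dim}\,A<\infty$ are compatible and actually force vanishing rather than merely periodicity (a priori, $H^i(G,P)$ could be a nonzero group that repeats with period $n$). The resolution is to note that $\mathrm{proj.dim}_{\Z/p\Z[G]}A=d$ gives $Ext^i(A,P)=0$ for \emph{all} $i>d$ and all modules $P$, in particular an unbounded range of vanishing, which when fed through the long exact sequence together with the already-established isomorphisms $H^i(G,P)\cong H^{i+n}(G,P)$ pins the common value to $0$. The uniformity of $k$ in $P$ is immediate since $k_0$ (from the definition of periodicity) and $d$ (from Lemma~\ref{proj.dim}) are both independent of $P$.
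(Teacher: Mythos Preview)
Your argument has a genuine gap at exactly the point you yourself flag. You correctly observe that periodicity alone gives only $H^i(G,P)\cong H^{i+n}(G,P)$ for $i\ge k_0$, and that a priori this common value could be a fixed nonzero group. Your proposed ``resolution'' does not escape this: the vanishing $Ext^i_{\Z/p\Z[G]}(A,P)=0$ for $i>d$, fed into the long exact sequence of $0\to\Z/p\Z\to A\to K_{n-1}\to 0$, yields $Ext^i(\Z/p\Z,P)\cong Ext^{i+1}(K_{n-1},P)\cong Ext^{i+n}(\Z/p\Z,P)$ by dimension shifting. That is precisely the periodicity isomorphism again, obtained a second time; nothing in this chain forces the value to be $0$. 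The phrases ``eventually stabilises'' and ``pins the common value to $0$'' are not supported by any actual step.

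The paper closes this gap by a genuinely different device: from the monomorphism $\Z/p\Z\hookrightarrow A$ one shows that every injective $\Z/p\Z[G]$-module $I$ is a retract of $A\otimes_{\Z/p\Z}I$, hence has projective dimension $\le d$. Thus $spli\,\Z/p\Z[G]\le d$, and by the Gedrich--Gruenberg inequality $silp\,\Z/p\Z[G]\le d$ as well. This means every projective $P$ has \emph{injective} dimension $\le d$, whence $H^i(G,P)=Ext^i_{\Z/p\Z[G]}(\Z/p\Z,P)=0$ for $i>d$. The point is that one needs control on the injective dimension of $P$, not merely on the projective dimension of $A$; your argument never leaves the ``projective'' side and so cannot produce the required vanishing.
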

\begin{proof}
By Lemma \ref{proj.dim}, there is a monomorphism $\iota:\Z/p\Z \to A$ with $A$ a
$\Z/p\Z[G]$-module of finite projective dimension $d$ over $\Z/p\Z[G]$. Let $I$ be an injective $\Z/p\Z[G]$-module.
$I$ injects into $A\otimes_{\Z/p\Z}I$ 
via $x\mapsto \iota(1)\otimes x$ and, as $I$ is injective,
$I$ is a retract of $A\otimes_{\Z/p\Z}I$. For any projective
$\Z/p\Z[G]$-module $P$, $P\otimes_{\Z/p\Z} I$ is projective. Thus $proj.dim_{\Z/p\Z[G]}A\otimes_{\Z/p\Z}I\le d$
and, because $I$ is a retract of that module, $proj.dim_{\Z/p\Z[G]}I\le d$ too. We conclude
that the supremum of the the projective length of injective $\Z/p\Z[G]$-modules, $spli\, {\Z/p\Z}[G]$, is $\le d$.
It follows then that $silp\, {\Z/p\Z}[G]$, the supremum of the injective length of projective $\Z/p\Z[G]$-modules, 
is $\le d$  too (see \cite{GG}*{Theorem 2.4}) and we infer that $H^i(G,P)=0$ for $i>d$ and all projective $\Z/p\Z$-modules $P$.
\end{proof}
The following is an algebraic characterization of groups with twisted $p$-periodic cohomology.
  
\begin{lemm}
A group $G$ has twisted $p$-periodic cohomology if and only if
there exists an $n>1$ and an exact sequence
of $\Z/p\Z [G]$-modules 
$$\epsilon:\quad 0\to\Z/p\Z\to A \to P_{n-2}\to\cdots P_0\to \Z/p\Z\to 0$$
with $P_i$ projective for $0\le i\le n-2$ and $proj.dim_{\Z/pZ [G]}(A)<\infty$,
such that $[\epsilon]\in Ext^n_{\Z/p\Z G}(\Z/p\Z,\Z/p\Z)= H^n(G,\Z/p\Z)$ is $\omega$-$p$-integral
(in the sense of Definition \ref{omega-p-integral})
for some $\omega:G\to\hat{\Z}_p^\times$. $G$ has $p$-periodic cohomology if and only if
there is an $\epsilon$ as above with $[\epsilon]$ $p$-integral (for the definition of $\omega$-$p$-integral
cohomology elements see Definition \ref{omega-p-integral}).
\end{lemm}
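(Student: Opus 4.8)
The forward implication is essentially a citation. Suppose $G$ has twisted $p$-periodic cohomology, with periodicity generator $e_\omega\in H^m(G,\hat{\Z}_p(\omega))$; replacing $e_\omega$ by $e_\omega^2$ we may assume $m$ is even, so that $n:=m(p-1)>1$. By Fermat's little theorem $\omega^{p-1}$ reduces to the trivial homomorphism modulo $p$, so, by the discussion following Lemma \ref{twistedmodp}, the mod $p$ reduction $e\in H^n(G,\Z/p\Z)$ of $e_\omega^{p-1}\in H^n(G,\hat{\Z}_p(\omega^{p-1}))$ is a periodicity generator witnessing that $G$ has $\Z/p\Z$-periodic cohomology via cup product with $e$; moreover $e$ is $\omega^{p-1}$-$p$-integral, and $p$-integral if $\omega$ is trivial. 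Feeding $e$ into the construction preceding Lemma \ref{proj.dim} produces an $n$-extension $\tilde e:0\to\Z/p\Z\to A\to P_{n-2}\to\cdots\to P_0\to\Z/p\Z\to0$ with the $P_i$ projective, and Lemma \ref{proj.dim} gives $proj.dim_{\Z/p\Z[G]}A<\infty$. This $\tilde e$ is the required exact sequence, with $[\tilde e]=e$ being $\omega^{p-1}$-$p$-integral (respectively $p$-integral).

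For the converse, let $\epsilon$ be as in the statement, put $d:=\max\bigl(proj.dim_{\Z/p\Z[G]}A,\,1\bigr)$, and choose $e_\omega\in H^n(G,\hat{\Z}_p(\omega))$ reducing to $e:=[\epsilon]$ with $G$ acting trivially on $\hat{\Z}_p/p\hat{\Z}_p$. Breaking $\epsilon$ at the term $P_{n-2}$ yields a short exact sequence $0\to\Z/p\Z\to A\to L\to0$ together with an exact sequence $0\to L\to P_{n-2}\to\cdots\to P_0\to\Z/p\Z\to0$; since $L$ is an $(n-1)$st syzygy of $\Z/p\Z$ one has $Ext^{j}_{\Z/p\Z[G]}(L,N)\cong H^{j+n-1}(G,N)$ for $j\ge1$, and under this identification the connecting homomorphism of the first sequence is, up to sign, cup product with $e$. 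As $Ext^{j}_{\Z/p\Z[G]}(A,-)=0$ for $j>d$, reading off the kernel and cokernel of that connecting map shows that for every $\Z/p\Z[G]$-module $N$ the map
$$e\cup- \, : \, H^i(G,N)\to H^{i+n}(G,N)$$
is an isomorphism for $i>d$ and an epimorphism for $i\ge d$. (In particular $G$ has $\Z/p\Z$-periodic cohomology, reproving the relevant half of Lemma \ref{proj.dim}.)

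The heart of the proof is to make this uniform in the exponent. Writing $e(k)_\omega\in H^n(G,(\Z/p^k\Z)_\omega)$ for the reduction of $e_\omega$ modulo $p^k$, the claim is that for every $k\ge1$ and every $\Z/p^k\Z[G]$-module $M$ the map
$$e(k)_\omega\cup- \, : \, H^i(G,M)\to H^{i+n}(G,M_\omega)$$
is an isomorphism for $i>d$ and an epimorphism for $i\ge d$. The case $k=1$ is the previous paragraph, $\omega$ being trivial modulo $p$ so that $M_\omega\cong M$ for $\Z/p\Z[G]$-modules. For the step from $k$ to $k+1$, let $M$ be a $\Z/p^{k+1}\Z[G]$-module and apply the five lemma (in both directions) to the ladder of long exact cohomology sequences attached, by naturality of cup product in the coefficients, to $0\to pM\to M\to M/pM\to0$ and its $(-)_\omega$-twist (exact because $\hat{\Z}_p$ is $\Z$-flat): here $pM$ is a $\Z/p^k\Z[G]$-module and $M/pM$ a $\Z/p\Z[G]$-module, so the inductive hypothesis and the base case control the outer terms, and the maps on those terms are $e(k)_\omega\cup-$ and $e\cup-$ respectively. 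The point that keeps the bound $d$ from drifting upward at each stage --- and that a naive induction on the exponent would lose --- is that one must carry the ``epimorphism for $i\ge d$'' clause in tandem with the ``isomorphism for $i>d$'' clause, since it is precisely that surjectivity at the bottom of the range which the epi half of the five lemma requires. I expect this step to be the main obstacle.

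It remains to conclude. A $p$-torsion $\Z G$-module $M$ of finite exponent is a $\Z/p^k\Z[G]$-module for suitable $k$, and then $M_\omega=M\otimes_\Z\hat{\Z}_p(\omega)=M\otimes_{\Z/p^k\Z}(\Z/p^k\Z)_\omega$, while $e_\omega\cup-$ agrees with $e(k)_\omega\cup-$ on $H^*(G,M)$ by naturality of the cup product in the coefficients; by the claim it is an isomorphism in all degrees $>d$. Hence $G$ has twisted $p$-periodic cohomology, with period $n$, generator $e_\omega$, and constant $d+1$ in Definition \ref{twisted}. If $[\epsilon]$ is $p$-integral the same argument run with $\omega$ trivial shows that $G$ has $p$-periodic cohomology.
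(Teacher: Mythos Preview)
Your proof is correct and follows the paper's approach: the forward direction is identical (pass to $e=e_\sigma(p)^{p-1}$, invoke Lemma~\ref{proj.dim}), while for the converse the paper simply asserts in one line that the $\omega$-$p$-integral class $e=[\epsilon]$ yields twisted $p$-periodicity, and you supply the intended details via the induction on $k$ with the five lemma (the same device used after Definition~\ref{cdim} and in Lemma~\ref{c2}). Your explicit tracking of the ``epimorphism for $i\ge d$'' clause alongside the isomorphism clause is exactly what is needed to keep the bound uniform, and is a nice clarification of a point the paper suppresses.
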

\begin{proof}
Suppose $G$ has twisted $p$-periodic cohomology. By
definition, there exists
$\sigma: G\to \hat{\Z}^\times_p$ and an an $\sigma$-$p$-integral cohomology class $e_\sigma\in H^m(G,\hat{\Z}_p(\sigma))$
and a $k>0$ such that cup product with $e_\sigma$ induces isomorphisms $H^i(G,M)\to H^{i+m}(G,M_\sigma)$
for all $p$-torsion $\Z[G]$-modules $M$
of finite exponent (we may assume without loss of generality that $m>1$). It follows 
(cf.~Lemma \ref{twistedmodp}) that $G$ has $\Z/p\Z$-periodic cohomology via cup product with
$e:=e_{\sigma}(p)^{p-1}$, where $e_\sigma(p)$ denotes the mod $p$ reduction of $e_\sigma$. Putting $n=m(p-1)$,
it follows that $e$ is $\omega$-$p$-integral with respect to $\omega =\sigma^{p-1}$
and can be represented by an $n$-extension
$$\tilde{e}:\quad 0\to\Z/p\Z\to A \to P_{n-2}\to\cdots \to P_0\to \Z/p\Z\to 0$$
with $P_i$ projective for $0\le i\le n-2$ and $proj.dim_{\Z/pZ [G]}(A)<\infty$. 
Conversely, if we are given an $n$-extension
$$\epsilon:\quad 0\to\Z/p\Z\to A \to P_{n-2}\to\cdots \to P_0\to \Z/p\Z\to 0$$
with $P_i$ projective for $0\le i\le n-2$ and $proj.dim_{\Z/pZ [G]}(A)<\infty$
representing an $\omega$-$p$-integral class $e\in H^n(G,\Z/p\Z)$, then $G$ is twisted
$p$-periodic via cup product with $e$. The untwisted version of the lemma corresponds
to the case where we can choose for $\omega$ the trivial homomorphism.
\end{proof}

\section{Some remarks and examples}{\label{last}}

One cannot expect a group $G$ to have $\Z/p\Z$-periodic cohomology if all its finite subgroups
do (for instance, torsion-free groups do not have $\Z/p\Z$-periodic cohomology in general). 
We will display below a class of groups, for which this assertion holds. For the proofs, we will make use 
of Tate cohomology $\hat{H}^*(G,-)$ for arbitrary groups, as defined in \cite {GM}. 
In case $G$ admits a finite dimensional classifying space for proper actions
$\underbar{EG}$, there is a finitely
convergent stabilizer spectral sequence
$$E_1^{m,n}=\prod_{\sigma\in \Sigma_m}\hat{H}^n(G_\sigma,M)\Longrightarrow \hat{H}^{m+n}(G,M)$$
where $\Sigma_m$ is a set of representatives of $m$-cells of $\underbar{EG}$ and $M$ a $\Z G$-module.
For $G$ a group, $M$ a $\Z G$-module and $\mathcal{F}$ the set of finite
subgroups of $G$, we write
$$\mathcal{H}^q(G,M)\subset \prod_{H\in \mathcal F}\hat{H}^q(H,M)$$
for the set of {\sl{compatible}} families $(u_H)_{H\in\mathcal F}$ with respect to
restriction maps of finite subgroups of $G$, induced by embeddings given by conjugation
by elements of $G$. 

There are many results on groups $G$ which imply the existence of a finite dimensional $\underbar{EG}$.
For instance, groups of cohomological dimension $1$ over $\Q$ do: they act on a tree with finite stabilizers.
Also, if there is a short exact sequence $H\to G\to Q$ of groups and $H$ as well as $Q$ admit a finite dimensional $\underbar{E}$ and
there is a bound on the order of the finite subgroups of $Q$, then there exist a finite dimensional 
model for $\underbar{EG}$
(cf. L\"uck \cite{L}*{Theorem 3.1}).

\begin{lemm}\label{help}
Suppose $G$ admits a finite dimensional $\underbar{EG}$. Then the following holds.
\begin{itemize}
\item[i)] The natural map induced by restricting to finite subgroups
$$\rho: \hat{H}^\ast(G,\Z/p\Z)\to \mathcal{H}^\ast(G,\Z/p\Z)$$
has the property that every element in the kernel of $\rho$ is nilpotent,
and that for every $u\in\mathcal{H}^\ast(G,\Z/p\Z)$ there is a $k$
such that $u^{p^k}$ lies in the image of $\rho$.
\item[ii)] If $\dim\underbar{EG}=t$ and the order of every finite $p$-subgroup of $G$ divides $p^s$
then for every $\Z_{(p)} G$-module $M$ and all $i$
$$p^{s(t+1)}\cdot\hat{H}^i(G,M)=0\,.$$
\item[iii)] If there is a bound on the order of the finite $p$-subgroups of $G$ then
the natural map 
$$\alpha: \hat{H}^\ast(G,\Z_{(p)})\to \hat{H}^\ast(G,\Z/p\Z)$$
has the property that every element in the kernel of $\alpha$ is nilpotent and for any
$u\in \hat{H}^\ast(G,\Z/p\Z)$ there exist $k$ such that $u^{p^k}$ lies
in the image of $\alpha$.

\end{itemize}
\end{lemm}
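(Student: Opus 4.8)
The plan is to exploit the finitely convergent stabilizer spectral sequence
$$E_1^{m,n}=\prod_{\sigma\in\Sigma_m}\hat H^n(G_\sigma,M)\Longrightarrow\hat H^{m+n}(G,M)$$
associated to the finite dimensional $\underbar{EG}$, and to read off each of the three statements by controlling the columns of this spectral sequence. The essential observation is that for a \emph{finite} group $H$, Tate cohomology $\hat H^*(H,\Z/p\Z)$ is a ring in which the kernel of restriction to the family of $p$-subgroups (in fact to all subgroups) consists of nilpotents, and the behaviour of cup products along the filtration is multiplicative, so a class which dies after finitely many restrictions can be raised to a bounded $p$-power and lifted.

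For part (i), I would argue as follows. If $u\in\ker\rho$, then $u$ restricts trivially to every finite subgroup, hence lies in filtration $\ge 1$ of the spectral sequence above (its image in $E_\infty^{0,*}$, a subquotient of $\prod_{\sigma\in\Sigma_0}\hat H^*(G_\sigma,\Z/p\Z)$, vanishes). Since the filtration has length $t=\dim\underbar{EG}$ and cup product is compatible with the multiplicative filtration, $u^{t+1}=0$; so every element of $\ker\rho$ is nilpotent. For the surjectivity-up-to-$p$-powers statement, given a compatible family $u=(u_H)\in\mathcal H^*(G,\Z/p\Z)$, one uses the standard detection/transfer argument (as in the classical work of Evens--Venkov and its Tate-cohomology analogue in \cite{GM}): the obstructions to lifting $u$ through the finitely many stages of the filtration all take values in groups annihilated by a bounded $p$-power, because $\hat H^*$ of a finite $p$-group is $p^s$-torsion whenever the group has order dividing $p^s$; hence a suitable $u^{p^k}$ lifts to the image of $\rho$. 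Here one must be slightly careful that $\rho$ lands in compatible families and that the edge homomorphism of the spectral sequence is exactly restriction to the $0$-cells' stabilizers followed by the compatibility condition; this is where I expect to invoke the construction of $\hat H^*$ for infinite groups from \cite{GM} most heavily.

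For part (ii), the argument is a direct multiplicativity count on the spectral sequence with coefficients in an arbitrary $\Z_{(p)}G$-module $M$. By part of the hypothesis every finite $p$-subgroup $G_\sigma$ has order dividing $p^s$, so $\hat H^n(G_\sigma,M)$ is annihilated by $p^s$ (the order of a finite group annihilates its Tate cohomology with any coefficients); one should note that only the $p$-part matters after localizing at $p$, so finite subgroups of order prime to $p$ contribute nothing to $\hat H^*(G,M)$ with $\Z_{(p)}$-coefficients, and mixed-order subgroups reduce to their $p$-Sylow via the usual transfer. Thus every entry $E_1^{m,n}$ is killed by $p^s$, hence so is every $E_\infty^{m,n}$, and since the filtration of $\hat H^i(G,M)$ has at most $t+1$ nonzero quotients, $p^{s(t+1)}\cdot\hat H^i(G,M)=0$. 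Part (iii) follows the same template as (i): the coefficient sequence $0\to\Z_{(p)}\xrightarrow{p}\Z_{(p)}\to\Z/p\Z\to 0$ gives a Bockstein long exact sequence relating $\hat H^*(G,\Z_{(p)})$ and $\hat H^*(G,\Z/p\Z)$, and a uniform bound on the order of finite $p$-subgroups gives, via (ii), a uniform exponent $N=p^{s(t+1)}$ for all the groups $\hat H^*(G,\Z_{(p)})$; one then checks that the kernel of the reduction map $\alpha$ is exactly the image of multiplication by $p$ up to the Bockstein, which is $N$-torsion and hence, by multiplicativity of the filtration once more, consists of nilpotents, while a class in the image of the Bockstein becomes a $p$-th power after raising to a bounded $p$-power, so $u^{p^k}$ lifts along $\alpha$.

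The main obstacle I anticipate is not any single estimate but making the ``lifting up to $p$-powers'' step in (i) and (iii) genuinely precise: one must track the obstruction classes through all $t$ stages of the (finite) filtration and argue that at each stage the obstruction is $p$-power torsion with a controlled exponent, so that a single $p^k$ (depending only on $t$ and $s$) suffices for the whole tower. This is exactly the point where the analogue of the Evens norm / Venkov finite-generation argument must be adapted to Tate cohomology of infinite groups, and where I would lean on the formal properties of $\hat H^*(-,-)$ established in \cite{GM} together with the finitely convergent stabilizer spectral sequence rather than redo that theory from scratch.
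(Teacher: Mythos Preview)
Your argument for (ii) is correct and is exactly what the paper does: each $E_1^{m,n}$ is killed by $p^s$ (only the $p$-part of $|G_\sigma|$ matters for $\Z_{(p)}$-modules), and the filtration has length $t+1$. For (i) the paper simply cites \cite{MT}*{Corollary~3.3}; your filtration argument for nilpotency of $\ker\rho$ is the right one, and your surjectivity sketch, while vague, is pointing at the same obstruction-through-finitely-many-stages mechanism used there.

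The genuine gap is in (iii), specifically the surjectivity-up-to-$p$-powers. Your sentence ``a class in the image of the Bockstein becomes a $p$-th power after raising to a bounded $p$-power, so $u^{p^k}$ lifts along $\alpha$'' does not work as stated: the integral Bockstein $\delta:\hat H^*(G,\Z/p\Z)\to\hat H^{*+1}(G,\Z_{(p)})$ is \emph{not} a derivation of $\Z/p\Z$-algebras (its target is not even a $\Z/p\Z$-module), so there is no direct reason that $\delta(u^{p^k})=0$. The paper's route, following \cite{Br}*{Ch.~X, Lemma~6.6}, inserts the intermediate coefficient groups $\Z/p^{\ell+1}\Z$: since the \emph{mod-$p$} Bockstein is a derivation, $u^p$ lies in the image $I_1$ of $\hat H^*(G,\Z/p^{2}\Z)\to\hat H^*(G,\Z/p\Z)$, and inductively $u^{p^\ell}\in I_\ell$. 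One then uses (ii) to show that for $\ell\ge s(t+1)$ the image $I_\ell$ coincides with the image of $\alpha$ (compare the connecting maps for $\Z_{(p)}\xrightarrow{p^{\ell+1}}\Z_{(p)}\to\Z/p^{\ell+1}\Z$ and $\Z_{(p)}\xrightarrow{p}\Z_{(p)}\to\Z/p\Z$; the former is multiplication by $p^\ell$ on the latter, hence zero). Your proposal skips this two-step structure, and without it the lifting claim is unsupported.

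For the nilpotency half of (iii) your idea is right but the phrasing ``$N$-torsion and hence, by multiplicativity of the filtration once more, consists of nilpotents'' obscures it. No filtration is needed: from the coefficient sequence $\Z_{(p)}\xrightarrow{p}\Z_{(p)}\to\Z/p\Z$ one has $y\in\ker\alpha\Rightarrow y=pz$ for some $z$, and then $y^{s(t+1)}=p^{s(t+1)}z^{s(t+1)}=0$ directly by (ii). That is the paper's one-line argument; invoking the spectral-sequence filtration here is a detour.
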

\begin{proof} i) is Corollary 3.3 of \cite{MT}. 
For ii) we observe that for every $\Z_{(p)}G$-module $M$, the $E_1$-term of the stabilizer
spectral sequence is annihilated by $p^s$. Since $\underbar{EG}$ has dimension $t$, this
implies that $p^{s(t+1)}$ annihilates all groups $\hat{H}^\ast(G,M)$.
For iii) we first use ii) to conclude that $p^{s(t+1)}$ annihilates the 
groups $\hat{H}^*(G,\Z_{(p)})$. One then
argues as in the proof of Lemma 6.6 , Ch.~X in \cite{Br} that for any $\ell>0$ and $x\in \hat{H}^\ast(G,\Z/p\Z),$
$x^{p^\ell}$ lies in the image 
$I_\ell$ of 
$$\hat{H}^\ast(G,\Z/p^{\ell+1}\Z)\to \hat{H}^\ast(G,\Z/p\Z)$$
and that for $\ell$ large enough $I_\ell$ equals the image of the natural map
$$\alpha: \hat{H}^\ast(G,\Z_{(p)})\to \hat{H}^\ast(G,\Z/p\Z)\,,$$ implying one part of iii).
If $y$ lies in the kernel of $\alpha$, the long exact coefficient sequence
associated with the short exact sequence $\Z_{(p)}\stackrel{p}\to \Z_{(p)}\to \Z/p\Z$
shows that $y=pz$ for some $z$ and therefore $y^{s(t+1)}=p^{s(t+1)}z^{s(t+1)}=0$, finishing the proof of iii).
\end{proof}
 
\begin{theo}\label{bound}
Let $G$ be a group which admits a finite dimensional $\underbar{EG}$. Then the following holds.
\begin{itemize}
\item[a)]
$G$ has $\Z/p\Z$-periodic cohomology if and only if all its finite subgroups do;
\item[b)] $G$ has $p$-periodic cohomology if all its finite subgroups do and there is a bound
on the order of the finite $p$-subgroups of $G$.
\end{itemize}
\end{theo}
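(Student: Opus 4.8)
\textbf{Proof proposal for Theorem \ref{bound}.}

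The plan is to use the Tate-cohomology machinery of Lemma \ref{help} together with the periodicity detection results already available, treating parts a) and b) in parallel but with b) requiring the extra input of Lemma \ref{help}(iii). First I would dispose of the easy implications: if $G$ has $\Z/p\Z$-periodic (resp. $p$-periodic) cohomology, then restriction to any finite subgroup $H$ shows $H$ has the corresponding periodicity, since the periodicity generator $e\in H^n(G,\Z/p\Z)$ (resp. its integral/twisted lift) restricts to a periodicity generator for $H$ — the cup-product isomorphism $H^i(G,M)\to H^{i+n}(G,M)$ passes to subgroups via the Shapiro/restriction formalism, and for finite $H$ high-dimensional periodicity of ordinary cohomology forces the same in Tate cohomology. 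This is the direction that holds with no dimension hypotheses beyond what is assumed.

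For the substantive direction of a), suppose every finite subgroup of $G$ has $\Z/p\Z$-periodic cohomology. The strategy is to produce a periodicity class in $\hat H^*(G,\Z/p\Z)$ and then pull it back to an honest class in $H^*(G,\Z/p\Z)$ using $cd_{\Z/p\Z}$-finiteness of $\underbar{EG}$-related data. Concretely: each finite subgroup $H$ with $p$-periodic cohomology has a periodicity class in $\hat H^{2d_H}(H,\Z/p\Z)$ for a suitable common period (after replacing by cup powers one can take a single $d$ working for all of a conjugacy-complete finite family, invoking that the periods are bounded once the Sylow $p$-subgroups have bounded order — which follows from finite-dimensionality of $\underbar{EG}$ via Lemma \ref{help}(ii), since bounded exponent of $\hat H^*$ bounds the $p$-part of the group orders that can occur). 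These assemble into a compatible family $u\in\mathcal H^{2d}(G,\Z/p\Z)$. By Lemma \ref{help}(i), some $p$-power $u^{p^k}$ lifts to a class $\tilde e\in\hat H^*(G,\Z/p\Z)$. One then checks that cup product with $\tilde e$ is a high-dimensional isomorphism on $\hat H^*(G,M)$ for all $\Z/p\Z[G]$-modules $M$: this is detected on finite subgroups via the stabilizer spectral sequence (here finite-dimensionality of $\underbar{EG}$ is crucial so the spectral sequence is finite), where periodicity holds by hypothesis. Finally, to convert Tate periodicity into ordinary $\Z/p\Z$-periodicity one uses that for $i$ above $\dim\underbar{EG}$ the map $H^i(G,M)\to\hat H^i(G,M)$ is an isomorphism, together with the vanishing of $\hat H^*(G,P)$ for projectives $P$ in a range (the converse direction of Corollary \ref{zeroproj} is not needed; instead one argues directly).

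For b), the extra ingredient is Lemma \ref{help}(iii): the bound on the order of finite $p$-subgroups lets us replace $\Z/p\Z$-coefficients by $\Z_{(p)}$-coefficients. Having obtained (from part a)) a $\Z/p\Z$-periodicity generator, one takes a large enough $p$-power so that by Lemma \ref{help}(iii) it lifts along $\alpha:\hat H^*(G,\Z_{(p)})\to\hat H^*(G,\Z/p\Z)$ to an integral class; this integral class, restricted to each finite subgroup, is a $p$-periodicity generator (the finite subgroups have $p$-periodic, not merely $\Z/p\Z$-periodic, cohomology), and by the stabilizer spectral sequence argument it induces high-dimensional isomorphisms in $\hat H^*(G,M)$ for $\Z_{(p)}G$-modules $M$, hence $G$ has $p$-periodic cohomology (possibly after passing back to ordinary cohomology in high degrees as in part a)). The main obstacle I anticipate is the bookkeeping needed to choose a single period $n$ (and a single lift) that works simultaneously for the whole — possibly infinite — family of finite subgroups: this is exactly where the hypothesis that $\underbar{EG}$ is finite-dimensional does double duty, bounding both the orders of finite $p$-subgroups (hence the possible periods) and the length of the stabilizer spectral sequence (hence allowing detection of periodicity on the finite subgroups); making these two uses mesh cleanly, and controlling the nilpotent kernels in Lemma \ref{help}(i) and (iii) so that a genuine periodicity isomorphism — not just an isomorphism up to nilpotents — is produced, is the delicate part.
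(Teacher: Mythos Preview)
Your overall strategy is close to the paper's, but there is one concrete error and one place where the paper's argument is substantially slicker.

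The error is your claim that finite-dimensionality of $\underline{E}G$ bounds the orders of the finite $p$-subgroups of $G$, which you attribute to Lemma \ref{help}(ii). That lemma has the bound as a \emph{hypothesis}, not a conclusion, and the implication you want is simply false: the group $G(1+p)=\Z_{p^\infty}\rtimes_{1+p}\Z$ of Example \ref{mainexample} admits a finite-dimensional $\underline{E}G$ yet contains $\Z/p^n\Z$ for every $n$. This is exactly why part b) carries the bound as a separate assumption. For part a) your approach can still be rescued, since the $\Z/p\Z$-period of a finite group with cyclic Sylow $p$-subgroup always divides $2(p-1)$ (the Weyl-group action on $H^*(\Z/p^k\Z,\Z/p\Z)$ factors through $(\Z/p\Z)^\times$), so a common period exists without any hypothesis on subgroup orders---but your stated reason is wrong. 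You also gloss over how to assemble a genuinely \emph{compatible} family of periodicity generators across all finite subgroups; this is nontrivial and is essentially the content of \cite{MT}*{Theorem 4.4}, which the paper simply cites to obtain a unit $e\in\hat H^n(G,\Z/p\Z)$ directly, then pulls it back to ordinary cohomology using that $H^j\to\hat H^j$ is an isomorphism above $\dim\underline{E}G$.

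For b), the paper's argument is cleaner than your spectral-sequence detection. Instead of lifting only the periodicity generator and then arguing that cup product with the lift is an isomorphism by comparison on stabilizers, the paper lifts \emph{both} the unit $u$ and its inverse $v$ via Lemma \ref{help}(iii) to classes $\tilde u,\tilde v\in\hat H^*(G,\Z_{(p)})$. Since $\alpha(1-\tilde u\tilde v)=1-u^{p^k}v^{p^k}=0$, nilpotence of $\ker\alpha$ makes $\tilde u\tilde v$ invertible, so $\tilde u$ itself is a unit in $\hat H^*(G,\Z_{(p)})$. This yields the periodicity isomorphism immediately, with no need to verify anything on finite subgroups or worry about nilpotent error terms in a comparison argument.
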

\begin{proof}
We first prove a). If $G$ has $\Z/p\Z$-periodic cohomology and $e\in H^n(G,\Z/p\Z)$ is a periodicity
generator, then every subgroup $H<G$ has $\Z/p\Z$-periodic cohomology, with periodicity generator
the restriction $e_H\in H^n(H,\Z/p\Z)$. This follows from the natural isomorphism
$H^*(H,M)\cong H^*(G, \operatorname{Coind}_H^G M)$ (Shapiro Lemma). If all finite subgroups
of $G$ have $\Z/p\Z$-periodic cohomology, there exist a unit in $e\in \hat{H}^n(G,\Z/p\Z)$ for some
$n>0$ (cf.~\cite{MT}*{Theorem 4.4}). Since $\dim{\underbar{EG}}$ is finite, there is a $k>0$ such that
the natural map $\theta: H^j(G,M)\to \hat{H}^j(G,M)$ is an isomorphism for all $j\ge k$ and all $\Z G$-modules $M$.
Choose $\ell$ such that the degree of $e^\ell$ is larger than $k$ and choose $\epsilon\in H^{n\ell}(G,\Z/p\Z)$
such that $\theta(\epsilon)=e^\ell$. Then $G$ has $\Z/p\Z$-periodic cohomology with periodicity generator $\epsilon$,
finishing the proof of a). 
For b) we assume that all finite subgroups of $G$ have $p$-periodic cohomology and that there is a 
bound on the order of the finite $p$-subgroups. From Theorem 4.4 of \cite{MT} we conclude that there exist
a unit $u\in \hat{H}^n(G,\Z/p\Z)$ for some $n>0$. Let $v$ an inverse for $u$. By Lemma \ref{help} we can find $k>0$
and $\tilde{u},\tilde{v}\in \hat{H}^\ast(G,\Z_{(p)})$ such that $\alpha(\tilde{u})=u^{p^k}$ and
$\alpha(\tilde{v})=v^{p^k}$, where $\alpha: \hat{H}^\ast(G,\Z_{(p)})\to \hat{H}^\ast(G,\Z/p\Z)$ is the natural
map. From Lemma \ref{help} we conclude that $1-\tilde{u}\tilde{v}$ is nilpotent, thus $\tilde{u}\tilde{v}$ is
invertible, and we conclude that $\tilde{u}\in \hat{H}^{np^k}(G,\Z_{(p)})$ is a unit. Since $G$ admits a
finite dimensional $\underbar{EG}$, the supremum of the injective length of projective
$\Z G$-modules, $silp\, \Z G$, is finite. Therefore, 
there is an $n_0$ such that $H^n(G,P)=0$ for all $n>n_0$ and all projective
$\Z G$-modules $P$. By a basic property of Tate cohomology, this implies that there exist $m>0$ such that the canonical map
$\lambda: H^i(G,L)\to \hat{H}^i(G,L)$ is an isomorphism for all $i>m$ and all $\Z G$-modules $L$.
It follows that by choosing an $r>0$ such that $\tilde{u}^r$ has degree larger than $m$, that there is an $\epsilon\in
H^{np^kr}(G,\Z_{(p)})$ with $\lambda(\epsilon)=\tilde{u}^r$. Let
$$\beta: H^{np^kr}(G,\Z_{(p)})\to H^{np^kr}(G,\hat{\Z}_p)$$ be the canonical map and
put $e=\beta(\epsilon)$. Then cup-product with $e$ induces isomorphisms
$$e\cup - : H^j(G,M)\to H^{j+np^kr}(G,\hat{\Z}_p\otimes M)=H^{j+np^kr}(G,M)$$
for all $j>m$ and all $p$-torsion $\Z G$-modules $M$ of bounded exponent, proving that $G$ has $p$-periodic
cohomology.
\end{proof}

Note that we made use of the bound condition in b) of
Theorem \ref{bound} to prove the result, but that bound is not a necessary condition.
For instance, $\Z_{p^\infty}$ has $p$-periodic cohomology, but no bound on the order
of its finite $p$-subgroups. On the other hand, the following is an example
of a group $G$ which admits a finite dimensional $\underbar{EG}$
and with all finite subgroups having $p$-periodic cohomology, but $G$ not having $p$-periodic
cohomology. 
Let $\alpha\in \hat{\Z}_p^\times$ be a $p$-adic
unit and define $G(\alpha)$ to be the semi-direct product  $\Z_{p^\infty}\rtimes_\alpha \Z$,
where we have identified $Aut(\Z_{p^\infty})$ with $\hat{\Z}_p^\times$.

\begin{exam}\label{mainexample}
Let p be an odd prime and put $G(1+p)=\Z_{p^\infty}\rtimes_{1+p}\Z$.
\begin{itemize}
\item[a)] 
$G(1+p)$ has $\Z/p\Z$-periodic cohomology
of period 2.
\item[b)] $G(1+p)$ does not have $p$-periodic cohomology.
\item[c)] $G(1+p)$ has twisted $p$-periodic cohomology.
\item[d)] $G(1+p)$ acts freely on a simply connected 7-dimensional\\ $G(1+p)$-$CW$-complex 
which is a $\Z/p\Z$-homology $3$-sphere.
\end{itemize}
\end{exam}

\begin{proof}
a): Let $H=\Q\rtimes_{1+p}\Z$. There is a natural surjective map $H\to G(1+p)$
with kernel isomorphic to $\Z_{(p)}$. Note that $H$ has cohomological dimension 3. Choose $Y$ to be a
3-dimensionl model for $K(H,1)$ and $X$ the covering space corresponding to $\Z_{(p)}<H$.
$X$ is a free $G(1+p)$-$CW$-complex and $X\simeq K(\Z_{(p)},1)$, thus $X$ is a $\Z/p\Z$-homology $1$-sphere.
We then have homotopy fibration
$$X\to X/G(1+p)\to BG(1+p)\,,\quad X_{\Z/p\Z}= S^1_{\Z/p\Z}$$
which is $H\Z/p\Z$-orientable, because multiplication by $1+p$
is the identity on $\Z/p\Z$. It follows that the associated $\Z/p\Z$-Euler class $e\in H^2(G(1+p),\Z/p\Z)$ induces
via cup product isomorphisms 
$$H^i(G(1+p),M)\stackrel{e\cup -}\longrightarrow H^{i+2}(G(1+p),M)$$
for all $i>3$ and all $\Z/p\Z[G]$-modules
$M$, which proves a). 
For b) we consider the subgroups 
$$G_n=\Z/p^n\Z\rtimes_{1+p}\Z<G(1+p)$$
and observe that the minimal $p$-period
for $H^*(G_n,\Z/p^n\Z)$ is $\ge 2p^{n-1}$, because multiplication by $1+p$ on $H^2(\Z/p^n\Z,\Z/p^n\Z)=\Z/p^n\Z$ is an
automorphism of order $p^{n-1}$ for odd $p$. Thus, the minimal $p$-period for $G_n$ goes to $\infty$ as $n$ tends to $\infty$ and, therefore, $G(1+p)$ does not have $p$-periodic
cohomology. For c) we observe that the twisted $\hat{\Z}_p$-Euler class $\tilde{e}\in H^2(G(1+p),\hat{\Z}_p(\omega))$
of the homotopy $\Z/p\Z$-spherical fibration constructed in a), with $\omega: G(1+p)\to\hat{\Z}_p^\times$
given by $(x,y)\mapsto (1+p)^y$ for $(x,y)\in\Z_{p^\infty}\rtimes \Z$, has reduction mod $p$ equal to the
$\Z/p\Z$-Euler class $e$ of a). It follows that $G(1+p)$ has twisted
$p$-periodic cohomology of period $2$, with twisted $p$-periodicity induced by cup product with $\tilde{e}$.
For d) we again look at the free $G(1+p)$-$CW$-complex $X$ as constructed in a). The join $X\ast X$ is a simply connected free $G(1+p)$-$CW$-complex of dimension $7$, which is a 
$\Z/p\Z$-homology $3$-sphere, completing the proof.
\end{proof}

\begin{bibdiv}
\begin{biblist}
\bib{AS}{article}{
author={Adem, A.},
author={Smith, J.~H.},
title={Periodic complexes and group actions},
journal={Ann. of Math.},
volume={154},
date={2001},
pages={407--435},
}
\bib{Baues}{book}{
author={Baues, H.~J.},
title={Algebraic Homotopy},
publisher={Cambridge Studies in Advanced Mathematics 15, Cambridge University Press},
year={1988}
}
\bib{B}{article}{
author={Bousfield, A.~K.},
title={The localization of spaces with respect to homology},
journal={Topology},
volume={14},
date={1975},
number={6},
pages={133--150},
	}

\bib{Br}{book}{
author={Brown, K.~S.},	
	 title={Cohomology of Groups},
    series={Graduate Texts in Mathematics},
    volume={87},
	    publisher={Springer-Verlag},
	    place={New York Heidelberg Berlin},
	 date={1982},
	}

	\bib{GG}{article}{
	   author={Gedrich, T.~V.},
	   author={Gruenberg, K.~W.},
	   title={Complete cohomological functors on groups},
	   journal={Topology Appl.},
	   volume={25},
	   date={1987},
	   number={2},
	   pages={203--223},
	}

	\bib{G}{article}{
	   author={Gitler, S.},
	   title={Cohomology operations with local coefficients},
	   journal={Amer. J. Math.},
	   volume={85},
	   date={1963},
	   pages={156--188},
}
	
	\bib{L}{article}{
	   author={L\"uck, W.},
	   title={The type of the classifying space for a family of groups},
	   journal={J. of Pure and Applied Algebra},
	   volume={149},
	   date={2000},
	   pages={177-203},
	}
\bib{M}{article}{
author={May, P.},
title={Fibrewise localization and completion},
journal={Trans. Amer. Math. Soc.},
number={258},
date={1980},
pages={127-146}
}

	\bib{GM}{article}{
	   author={Mislin, G.},
	   title={Tate cohomology for arbitrary groups},
journal={Topology and its applications},
	volume={56},
	  date={1994},
	   pages={293-300},
	}
	
		\bib{MT}{article}{
	   author={Mislin, G.},
	   author={Talelli, O.},
	   title={On groups which act freely and properly on finite
	   dimensional homotopy spheres},
	   journal={London Math. Soc. Lecture Note Series},
	   volume={275},
	   date={2000},
	   pages={45-63},
	   	}	
	
	\bib{S}{article}{
	  author={Sullivan, D.},
	   title={Genetics of homotopy theory and the Adams conjecture},
	   journal={Ann. of Math.},
	   volume={100},
	   date={1974},
	   pages={1-89},}
		
	\bib{Swan}{article}{
	   author={Swan, R.~G.},
	   title={Periodic resolutions for finite groups},
	   journal={Ann. of Math.},
	   volume={72},
	   date={1960},
	   pages={267--291},
} 
	\bib{T}{article}{
	author={Talelli, O.},
	title={Periodic cohomology and free and proper actions
	on $\R^n\times S^m$},
	journal={London Math. Soc. Lecture Note Series},
	volume={261},
	date={1999},
	pages={701--717},
	}

\end{biblist}
\end{bibdiv}
\end{document}